\newtheorem{lemma}{Lemma}[section]
\newtheorem{theorem}[lemma]{Theorem}
\newtheorem{proposition}[lemma]{Proposition}
\newtheorem{corollary}[lemma]{Corollary}
\renewenvironment{proof}[1][\noindent \proofname]{{\sc #1. }}{\qed}
\theoremstyle{definition}
\newtheorem{remark}[lemma]{Remark}
\newtheorem{example}[lemma]{Example}
\newtheorem{definition}[lemma]{Definition}
\newtheorem{notation}[lemma]{Notation}{\bf}{\rm}
\newcommand{\abs}[1]{\ensuremath{\left| #1 \right|}}
\newcommand{\op}{\operatorname}
\newcommand{\ce}[2]{\pmb{\op{C}}_{#1}(#2)}
\newcommand{\ze}[1]{\pmb{\op{Z}}(#1)}
\newcommand{\irr}[1]{{\op{Irr}}(#1)}
\newcommand{\Min}[1]{\op{Min}_G(#1)}
\begin{document}

\title{\bf On $G$-character tables for normal subgroups}

\author{\sc  M.J. Felipe $^{*}$ $\cdot$ M.D. P\'erez-Ramos $^{\diamond}$ $\cdot$ V. Sotomayor
\thanks{Instituto Universitario de Matemática Pura y Aplicada (IUMPA-UPV), Universitat Polit\`ecnica de Val\`encia, Camino de Vera s/n, 46022 Valencia, Spain. \Letter: \texttt{mfelipe@mat.upv.es, vicorso@doctor.upv.es}\newline \indent $^{\diamond}$Departament de Matemàtiques, Universitat de València, C/ Doctor Moliner 50, 46100 Burjassot (València), Spain. \Letter: \texttt{dolores.perez@uv.es}
\newline \rule{6cm}{0.1mm}\newline
The authors are supported by Proyecto PGC2018-096872-B-I00 (MCIU/AEI/FEDER, UE), and by Proyectos PROMETEO/2017/057 and AICO/2020/298, Generalitat Valenciana (Spain). \newline
}}

\date{}

\maketitle

\begin{abstract}
\noindent Let $N$ be a normal subgroup of a finite group $G$. From a result due to Brauer, it can be  derived that the character table of $G$ contains square submatrices which are induced by the $G$-conjugacy classes of elements in $N$ and the $G$-orbits of irreducible characters of $N$. In the present paper, we provide an alternative approach to this fact through the structure
of the group algebra. We also show that such matrices are non-singular and become a useful tool to obtain information of $N$ from the character table of $G$.

\medskip

\noindent \textbf{Keywords} Finite groups $\cdot$ Group algebra $\cdot$ Irreducible characters $\cdot$ Normal subgroups $\cdot$ Conjugacy classes

\smallskip

\noindent \textbf{2010 MSC} 20C15 $\cdot$ 20E45 $\cdot$ 20C05
\end{abstract}


\section{Introduction}

In the sequel, all groups considered are finite. In general, if $N$ is a normal subgroup of a group $G$,  the character table of $N$ can not be computed from the one of $G$. However, normal subgroups are clearly detected from the character table of the group, as intersections of kernels of irreducible characters. If we consider the action by conjugation of $G$ on $N$, then the orbits are the conjugacy classes of $G$ which are contained in $N$, i.e. conjugacy classes $x^G=\{x^g\mid g\in G\}$ with $x\in N$, the so-called $G$\emph{-conjugacy classes} of $N$. Since $N$ is the union of those $G$-conjugacy classes, it is natural to wonder whether those columns of the character table of $G$ provide structural information of $N$. In this setting, for instance, recent investigations show that the sizes of the $G$-conjugacy classes are related to the structure of $N$, though one easily checks   that the prime divisors of the sizes of the $G$-conjugacy classes may even not divide the order of $N$. We refer to \cite{BFM} for a survey about the influence of conjugacy  classes contained in normal subgroups on the normal structure of the group, mainly focusing on the framework of graphs associated to the conjugacy classes. Further, the authors of \cite{FGS} analysed normal subgroups $N$ of a group $G$ that contain (non-)vanishing $G$-conjugacy classes. This turns out to be closely related to the research presented  in this paper (see Remark~\ref{zeros}).

Conjugation of $G$ on $N$ clearly induces an action of $G$ on the set of conjugacy classes of $N$,  and $G$-conjugacy classes are in one-to-one correspondence with the orbits of that action, since each $G$-conjugacy class appears as the union of the elements of one of those orbits. More precisely, if $x\in N$, then $x^G=\bigcup_{i=1}^r(x^N)^{h_i}=\bigcup_{i=1}^r (x^{h_i})^N,$ for suitable elements $h_i\in G$, $1\le i\le r$, where we may assume $h_1=1$; in particular, $|x^G|= r|x^N|$ for some integer $r\ge 1$. Clifford's theorem (c.f. \cite[Theorems~(6.2),~(6.5)]{ISA}  for the character and module theoretic settings, respectively) may be seen as a character version of this easy fact on conjugacy classes, as follows.

The group $G$ acts by conjugation on the set $\irr{N}$ of irreducible complex characters of $N$. We recall that  if $\theta$ is a class function of $N$ and $g\in G$, then $\theta^g$ is the  {\it class function  conjugate} to $\theta$, defined as $\theta^g(x)=\theta(gxg^{-1})$ for all $x\in N$. If $\theta\in \irr{N}$, then
$\theta^g\in \irr{N}$ and $\widehat{\theta}=\sum_{i=1}^t \theta^{g_i}$ is a {\it minimal $G$-invariant character} of $N$ (see Sections~\ref{G-tables}~and~\ref{sourceG-tables}), where $\{\theta^{g_i}\mid i=1,\dots, t\}=\{\theta^{g}\mid g\in G\}$ is the orbit of $\theta$ under the action by conjugation of $G$ on $\irr{N}$, being the set $\{g_i\in G\mid i=1,\dots, t\}$ a right transversal in $G$ of $I_G(\theta)=\{g\in G\mid \theta^g=\theta\}$, the {\it inertia subgroup} of $\theta$ in $G$. Clifford's theorem (see \cite[Theorem~(6.2)]{ISA}) states that irreducible characters $\chi$ of $G$ whose restrictions $\chi_{N}$ to $N$ have $\theta$ as constituent satisfy $\chi_N= e\,\widehat{\theta}$ for suitable integers $e$, known as {\it ramification numbers}. This fact introduces an equivalence relation (respect to $N$) on the set $\irr{G}$ of irreducible complex characters of $G$, being two elements equivalent if their restrictions to the normal subgroup $N$ have a common irreducible constituent (Definition~\ref{relEqIrr(G)}). In this case, the equivalence  classes of irreducible characters of $G$ are in one-to-one correspondence with the orbits of irreducible characters of $N$ under the action of $G$ by conjugation, by Clifford's theorem.

 Now, from the character table of $G$, if we focus on the $G$-conjugacy classes that make up $N$, then by taking one character in each of those equivalence classes, we obtain a submatrix which is associated to the normal subgroup $N$. We call these submatrices the $G$\emph{-character tables of }$N$ (Definition~\ref{G-character table}), and they happen to be square matrices, since the numbers of orbits in the actions of $G$ on the irreducible characters and conjugacy classes of $N$ are equal, by  a result due to Brauer (Theorem~\ref{brauer_theorem}, Corollary~\ref{corolario Brauer}).

In the present paper, we provide an alternative proof of this last fact in a module theoretic setting, analysing the structure of the subalgebra $\ze{\mathbb{K}[G]}\cap \mathbb{K}[N]$ of the group algebra $\mathbb{K}[G]$ over a splitting field $\mathbb{K}$ for $G$ (in particular, if $\mathbb{K}$ is algebraically closed) with characteristic not dividing the order of $G$. It is well-known that  the formal sums of the elements in the conjugacy classes of $G$ form a basis for $\ze{\mathbb{K}[G]}$, the center of the group algebra of $G$ over $\mathbb{K}$, whose dimension then coincides with the number of conjugacy classes of $G$ as well as with the number of pairwise non-isomorphic irreducible $\mathbb{K}[G]$-modules. Now it turns out that  the formal sums of the elements in the $G$-conjugacy classes that built $N$ form a basis for $\ze{\mathbb{K}[G]}\cap \mathbb{K}[N]$, and it is also proven that the dimension of this algebra coincides with the number of {\it minimal $G$-invariant $\mathbb{K}[N]$-modules}, up to isomorphism, which is equal to the number of orbits in the action by conjugation of $G$ on the set of irreducible $\mathbb{K}[N]$-modules, by Clifford's theorem. (See Sections~\ref{section 2}~and~\ref{section 3}, Lemma~\ref{lemma_classes} and Theorem~\ref{main_theorem}.) In particular, when $\mathbb{K}=\mathbb{C}$ is the complex field, it follows that the $G$-character tables of $N$ are square, as mentioned.

We show then in Section~\ref{G-tables} how the character table of $G$ nicely displays the set of irreducible characters of $G$ with a common irreducible constituent when restricting to the normal subgroup $N$, as well as the relations between their ramifications numbers (Corollary~\ref{sizes}). Also as applications, we prove that the $G$-character tables are non-singular, and we obtain some arithmetical relations among significant integers associated to the normal subgroup $N$ as, for instance, its character degrees, the indices in $G$ of the inertia subgroups of its irreducible characters and the ramification numbers, and these relations can be read off a $G$-character table of $N$ (Theorem~\ref{square}, Corollary~\ref{relations}).  In particular, it will be possible to know from the character table of $G$ whether $\chi_N\in \irr{N}$ if $\chi\in \irr{G}$ (Corollary~\ref{restriction}), and we will be able to compute  the exact values of those parameters in some occasions, as for instance for normal Hall subgroups (Remark~\ref{remarks}). Also we show that the prime divisors of the degrees of the minimal $G$-invariant characters of a normal subgroup   are known from the character table of the group  (Corollary~\ref{primesinvariants}).

In Section~\ref{sourceG-tables}, the minimal $G$-invariant characters of $N$ are proven to form a basis of the $\mathbb{C}$-vector space  of the $G$-invariant class functions of $N$ (Proposition~\ref{G-classfunctions}), and minimal $G$-invariant characters become relevant to $G$-character tables by playing the role that irreducible characters do to the character table of the group. They can be used as row indexes of the so called  {\it  $G$-invariant table} of a normal subgroup (Definition~\ref{sourcetable}), which is proven to be a useful tool. It is first applied to give an extension of the aforementioned Brauer's result (Theorem~\ref{Brauer}). As a consequence we obtain that the number of rows of the $G$-character tables that are real valued (i.e. the number of real valued minimal $G$-invariant characters) coincides with the number of $G$-conjugacy classes of $N$ that are \emph{real} (i.e. the $G$-conjugacy classes $x^G$ such that $x^G=(x^{-1})^G$, where $x\in N$) (Corollary~\ref{CorBrauer}). Indeed, we show that real G-conjugacy classes of normal subgroups can provide structural information about the subgroup, consistent with previous results in the literature about the relation between the number of real conjugacy classes and the structure of groups (Lemma~\ref{odd_order}, Corollary~\ref{teo_iwa}). Finally, in Section~\ref{Section examples}, some examples are provided  which show the scope of the results presented.

To close with, we mention that the notation and terminology used are standard within theories of groups and representations, and they are taken mainly from the books \cite{ISA}, \cite{Hu} and \cite{DH}.

\section{Preliminaries}\label{section 2}

We gather some notation, basic concepts and facts about algebras and modules which are used in the paper. In the following, $\mathbb{K}$ always denote a field and, for any group $X$, the \emph{group algebra over $\mathbb{K}$} is denoted by $\mathbb{K}[X]$. In general, $\mathbb{K}$-algebras $A$ are considered with identity $1_A$, and both them and their modules have  finite dimension as $\mathbb{K}$-vector spaces. If $A$ is a $\mathbb{K}$-algebra,  $V$ is an $A$-module and $M$ is an irreducible $A$-module, the \emph{$M$-homogeneous component} of $V$ is $H_V(M)= \sum \{X \mid X \text{ is an $A$-submodule of }V,\ V\cong M\}$ the sum of all those submodules of $V$ which are isomorphic to $M$. An $A$-module $V$ is understood to be \emph{completely reducible} if it is a sum of irreducible $A$-submodules or, equivalently, if $V=V_1\oplus \cdots \oplus V_r$ is a direct sum of irreducible $A$-submodules $V_1, \ldots, V_r$. Note that, in this case, $H_V(M)=\bigoplus\{V_i \mid V_i\cong M\}$; in particular, every completely reducible $A$-module is a direct sum of its homogeneous components.

First we establish a version for modules of the facts about characters mentioned in the Introduction. Let $N$ be a normal subgroup of a group $G$, and let $W$ be a $\mathbb{K}[N]$-module. Let $\overline{W}$ be a copy of the $\mathbb{K}$-vector space $W$, and let $\overline{w}$ denote the image of each $w\in W$ under some $\mathbb{K}$-linear isomorphism from $W$ to $\overline{W}$. For a fixed element $g\in G$ define an action of $N$ on $\overline{W}$ by $$\overline{w}\,n=\overline{w\,(gng^{-1})}$$ for all $w\in W$ and $n\in N$. Then $\overline{W}$ becomes a $\mathbb{K}[N]$-module under this action, called the \emph{conjugate module} of $W$ by $g$ and denoted by $W^g$ (\cite[B. Definition (7.2)]{DH}). 

It is worth noticing that if $W$ is a $\mathbb{K}[N]$-submodule of a $\mathbb{K}[G]$-module $V$, then $Wg=\{wg\mid w\in W\}\subseteq V$ is a $\mathbb{K}[N]$-submodule of $V$ isomorphic to $W^g$. 

The \emph{inertia subgroup} of $W$ in $G$ is the subgroup of $G$ defined as $I_G(W)=\{g\in G\mid W^g\cong W \text{ as } \mathbb{K}[N]\text{-modules}\}.$
The module $W$ is said to be \emph{$G$-invariant} if $W^g$ is isomorphic to $W$ for all $g\in G$, i.e. if $I_G(W)=G$. If $W$ is an irreducible $\mathbb{K}[N]$-module and $g\in G$, then $W^g$ is also irreducible, and so conjugation defines an action of the group $G$ on the set of irreducible $\mathbb{K}[N]$-modules. The orbit of $W$, up to isomorphism of $\mathbb{K}[N]$-modules, is the set $\{W^{g_i}\mid i=1,\dots,t\}$ where the set $\{g_i\in G\mid i=1,\dots, t\}$ is a right transversal in $G$ of $I_G(W)$. The following result is easily checked.

\begin{lemma}\label{conjugate} With the notation above, if $W$ is an irreducible $\mathbb{K}[N]$-module, the following assertions hold:
\begin{enumerate}\item The module $\widehat{W}:=\bigoplus_{i=1}^tW^{g_i}$ is a minimal $G$-invariant $\mathbb{K}[N]$-module.
\item Every minimal $G$-invariant $\mathbb{K}[N]$-module can be constructed from one orbit of irreducible modules in this way, and this defines a one-to-one correspondence between the set of orbits in the action by conjugation of $G$ on the set of irreducible $\mathbb{K}[N]$-modules and the set of minimal $G$-invariant $\mathbb{K}[N]$-modules (up to isomorphism).
\item Every completely reducible $G$-invariant $\mathbb{K}[N]$-module is a direct sum of  minimal $G$-invariant $\mathbb{K}[N]$-modules.
\end{enumerate}
\end{lemma}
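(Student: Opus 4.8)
The plan is to verify each of the three assertions in order, building on the observation that conjugation by elements of $G$ permutes the isomorphism classes of irreducible $\mathbb{K}[N]$-modules. For part (1), I would first check that $\widehat{W}=\bigoplus_{i=1}^t W^{g_i}$ is $G$-invariant. Fix $g\in G$; conjugation by $g$ sends the orbit $\{W^{g_i}\mid i=1,\dots,t\}$ into itself, since $(W^{g_i})^g\cong W^{g_ig}$ lies in the same orbit of $W$ under $G$. The key point is that right multiplication of the transversal $\{g_i\}$ by $g$ induces a permutation of the cosets $I_G(W)g_i$, so the multiset $\{W^{g_ig}\mid i\}$ equals $\{W^{g_i}\mid i\}$ up to isomorphism; hence $\widehat{W}^g\cong\widehat{W}$ as $\mathbb{K}[N]$-modules. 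This shows $\widehat{W}$ is $G$-invariant. For minimality, I would argue that any nonzero $G$-invariant $\mathbb{K}[N]$-submodule of $\widehat{W}$ must contain a copy of some $W^{g_i}$, hence (by $G$-invariance and the orbit structure) must contain all the conjugates $W^{g_j}$, and therefore equals $\widehat{W}$.

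For part (2), I would show the correspondence is well defined, surjective, and injective. Given any minimal $G$-invariant $\mathbb{K}[N]$-module $U$, completely reducible since $\mathrm{char}\,\mathbb{K}\nmid|N|$ (or by hypothesis on semisimplicity), pick an irreducible summand $W$. By $G$-invariance of $U$, all conjugates $W^{g}$ appear as constituents of $U$, so $\widehat{W}$ embeds in $U$; since $\widehat{W}$ is itself $G$-invariant and $U$ is minimal, we get $U\cong\widehat{W}$. This gives surjectivity. For injectivity and well-definedness, I would note that two orbits producing isomorphic $\widehat{W}$ must share a common irreducible constituent, forcing the orbits to coincide. Thus the assignment (orbit of $W$) $\mapsto$ (isomorphism class of $\widehat{W}$) is a bijection.

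For part (3), let $V$ be a completely reducible $G$-invariant $\mathbb{K}[N]$-module, and decompose it into homogeneous components $V=\bigoplus_M H_V(M)$, the sum over isomorphism classes $M$ of irreducible constituents. Since $V$ is $G$-invariant, conjugation by $G$ permutes these homogeneous components: $H_V(M)g$ is the $M^g$-homogeneous component. Grouping the homogeneous components according to the $G$-orbits of their labels $M$, each such group $\bigoplus_{M'\in\text{orbit}}H_V(M')$ is a $G$-invariant completely reducible submodule whose irreducible constituents form a single orbit, hence is a direct sum of copies of the corresponding $\widehat{W}$. Assembling these pieces expresses $V$ as a direct sum of minimal $G$-invariant submodules.

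I expect the main obstacle to be the minimality argument in part (1): one must rule out the existence of a proper nonzero $G$-invariant submodule of $\widehat{W}$, which requires careful use of the fact that the summands $W^{g_i}$ are pairwise non-isomorphic irreducibles (so the homogeneous components of $\widehat{W}$ are exactly the individual $W^{g_i}$) together with the transitivity of the $G$-action permuting them. Once this is handled cleanly, parts (2) and (3) follow by routine bookkeeping with homogeneous components and orbits.
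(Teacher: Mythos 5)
Your proof is correct. The paper itself offers no argument for this lemma (it is dismissed as ``easily checked''), so there is nothing to compare against; your write-up simply supplies the standard details, and it does so accurately: the cosets $I_G(W)g_i$ are permuted by right multiplication, $\widehat{W}$ is multiplicity-free so its homogeneous components are the individual $W^{g_i}$ and transitivity rules out proper nonzero $G$-invariant submodules, and grouping homogeneous components of a completely reducible $G$-invariant module by orbits yields the decomposition in (3). Two small points of care: the expression $H_V(M)g$ only makes literal sense when $V$ sits inside a $\mathbb{K}[G]$-module, so for an abstract $G$-invariant $V$ you should phrase this via the isomorphism $V\cong V^g$, which is what actually gives the key fact that multiplicities of irreducible constituents are constant along $G$-orbits (needed to conclude each orbit block is a direct sum of copies of $\widehat{W}$, not merely that its constituents lie in one orbit); and in (2) complete reducibility of a minimal $G$-invariant module need not be assumed, since the sum inside $U$ of one copy of each conjugate of an irreducible submodule is already a nonzero $G$-invariant submodule isomorphic to $\widehat{W}$.
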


From Clifford's theorem (c.f. \cite[Lemma (6.4), Theorem (6.5)]{ISA}) the following connection with irreducible $\mathbb{K}[G]$-modules is easily established. For notation, if $e$ is a positive integer and $V$ is any module, $e\,V=V\oplus\overset{e}{ \cdots}\oplus V$ denotes the direct sum of $e$ copies of $V$.

\begin{lemma}\label{Irr(G)} Let $N$ be a normal subgroup of a group $G$. For irreducible $\mathbb{K}[G]$-modules $V$ and $U$ the following statements are equivalent:
\begin{itemize}\item[(i)] The restricted $\mathbb{K}[N]$-modules $V_N$ and $U_N$, i.e. the modules $V$ and $U$ viewed as $\mathbb{K}[N]$-modules, have a common irreducible $\mathbb{K}[N]$-submodule (up to isomorphism).
\item[(ii)] There exist an irreducible $\mathbb{K}[N]$-submodule $W$ and positive integers $e_V$ and $e_U$ such that $V_N\cong e_V\, \widehat{W}$ and $U_N\cong e_U\, \widehat{W}$.
\item[(iii)] There are positive integers $e_V$ and $e_U$ such that $e_U\, V_N\cong e_V\, U_N$.
\end{itemize}
\end{lemma}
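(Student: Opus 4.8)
**The plan is to prove the equivalence of the three statements for irreducible $\mathbb{K}[G]$-modules $V$ and $U$, relative to a normal subgroup $N$, by leaning on Clifford's theorem.**

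The plan is to establish the cycle $(i)\Rightarrow(ii)\Rightarrow(iii)\Rightarrow(i)$, with Clifford's theorem (in the module-theoretic form cited from \cite[Lemma~(6.4),~Theorem~(6.5)]{ISA}) doing the essential work in the first implication.

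First I would recall what Clifford's theorem delivers in this setting. If $V$ is an irreducible $\mathbb{K}[G]$-module and $W$ is any irreducible $\mathbb{K}[N]$-submodule of the restriction $V_N$, then $V_N$ is completely reducible and its irreducible constituents are precisely the $G$-conjugates $W^{g}$ of $W$, each appearing with one and the same multiplicity $e_V$. Since $V$ is $\mathbb{K}[G]$-stable under the $G$-action and the conjugates of $W$ run exactly through one orbit $\{W^{g_i}\mid i=1,\dots,t\}$, this means $V_N\cong e_V\bigl(\bigoplus_{i=1}^{t}W^{g_i}\bigr)=e_V\,\widehat{W}$, with $\widehat{W}$ the minimal $G$-invariant $\mathbb{K}[N]$-module associated to the orbit of $W$ by Lemma~\ref{conjugate}. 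This is the only genuinely substantive input, and I expect it to be the main obstacle in the sense that everything hinges on correctly invoking the ``single orbit, constant multiplicity'' structure of $V_N$; the remaining steps are essentially bookkeeping.

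Granting this, $(i)\Rightarrow(ii)$ is immediate: if $V_N$ and $U_N$ share a common irreducible $\mathbb{K}[N]$-constituent $W$ (up to isomorphism), then the orbit of $W$ is the common orbit arising in both restrictions, so applying the above to each module yields $V_N\cong e_V\,\widehat{W}$ and $U_N\cong e_U\,\widehat{W}$ for the same $\widehat{W}$ and suitable positive integers $e_V,e_U$. For $(ii)\Rightarrow(iii)$, I would simply take the two isomorphisms from (ii) and form the multiples $e_U\,V_N\cong e_U e_V\,\widehat{W}\cong e_V\,U_N$, so the common integer multiple is realised with the roles of $e_V$ and $e_U$ interchanged as multipliers. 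Finally, for $(iii)\Rightarrow(i)$, from $e_U\,V_N\cong e_V\,U_N$ I would compare irreducible $\mathbb{K}[N]$-constituents on both sides: since $V$ is irreducible over $\mathbb{K}[G]$, its restriction $V_N$ is nonzero and hence has at least one irreducible $\mathbb{K}[N]$-constituent $W$; this $W$ then appears among the constituents of $e_U\,V_N$, and therefore among those of $e_V\,U_N$, whence it is a constituent of $U_N$ as well, giving (i). Here I would invoke that over a field $\mathbb{K}$ of the kind considered (with $\mathrm{char}\,\mathbb{K}\nmid|G|$, so that restrictions are completely reducible), the set of irreducible constituents of a completely reducible module is unchanged by passing to a multiple, which makes the constituent-matching step rigorous.
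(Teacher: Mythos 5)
Your proof is correct and follows exactly the route the paper intends: the paper states this lemma without proof, remarking only that it ``is easily established'' from Clifford's theorem, and your cycle $(i)\Rightarrow(ii)\Rightarrow(iii)\Rightarrow(i)$, with Clifford's theorem supplying the single-orbit, constant-multiplicity decomposition $V_N\cong e_V\,\widehat{W}$ for the first implication, is precisely that argument. One trivial remark: the complete reducibility of $V_N$ is itself part of Clifford's theorem and holds over any field, so the aside about $\mathrm{char}\,\mathbb{K}\nmid |G|$ is unnecessary (and the constituent-matching in $(iii)\Rightarrow(i)$ would in any case follow from Jordan--H\"older).
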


The previous lemma allows us to introduce an equivalence relation on the set of irreducible $\mathbb{K}[G]$-modules as follows:

\begin{definition}\label{relEqMod(G)}  Let $N$ be a normal subgroup of a group $G$. Two irreducible $\mathbb{K}[G]$-modules $V$ and $U$ are defined to be {\it equivalent respect to $N$} if they satisfy any of the conditions in Lemma~\ref{Irr(G)}.
\end{definition}
It is worth emphasizing the following fact: The equivalence class of each irreducible $\mathbb{K}[G]$-module $V$  is associated to a  minimal $G$-invariant $\mathbb{K}[N]$-module $\widehat{W}$, with $W$ an irreducible $\mathbb{K}[N]$-submodule of $V_N$, and this defines  a one-to-one correspondence between the set of equivalence classes of irreducible $\mathbb{K}[G]$-modules and the set of minimal $G$-invariant $\mathbb{K}[N]$-modules (up to isomorphism).

We will make use of the structure of semisimple algebras that we gather in the next result, which includes Wedderburn's theorem. An algebra $A$ is understood to be \emph{semisimple} if the regular $A$-module $A$, i.e. $A$ itself viewed as $A$-module under right multiplication, is completely reducible. If $M$ is an irreducible $A$-module, then $H_A(M)$ is the $M$-homogeneous component of $A$ as regular module. We refer to \cite[Theorem (1.15) and Corollary (1.17)]{ISA} and to \cite[V. Satz (3.8), Hauptsazt (4.4), Satz (4.5)]{Hu} for details.

\begin{theorem}
\label{wedderburn}
Let $A$ be a semisimple $\mathbb{K}$-algebra. Then:
\begin{itemize}
\setlength{\itemsep}{-1mm}
\item[\emph{(a)}] There is a positive integer $h$ and minimal bilateral ideals $A_1,\dots,A_h$, such that $A=\bigoplus_{i=1}^hA_i$. For $i\neq j$ it holds that $A_iA_j=0$. Moreover, each bilateral ideal of $A$ is a direct sum of some of $A_1,\dots, A_h$.
\item[\emph{(b)}] There exist exactly $h$ pairwise non-isomorphic irreducible $A$-modules $V_1,\dots, V_h$, and w.l.o.g. $A_i=H_A(V_i)$ for each $i=1,\dots, h$. If $V$ is an irreducible $A$-module, then $VA_i=0$ for $i=1,\dots, h$, unless $V\cong V_i$.
\item[\emph{(c)}] Each $A_i$ is a $\mathbb{K}$-algebra isomorphic to $\op{End}_{D_i}(V_i)$, the algebra of $D_i$-endomorphisms of $V_i$, for the division algebra $D_i=\op{End}_A(V_i)$.
\item[\emph{(d)}] Each $A_i$ is an algebra with identity element $1_{A_i}=e^i$, being $1_A=e^{1}+\cdots+e^{h}$ with $e^{i}\in A_i$ for each $i=1,\dots, h$.
\end{itemize}
Whenever $D_i\cong \mathbb{K}$, for some $i\in \{1,\dots,h\}$,  then  $A_i\cong \op{End}_{\mathbb{K}}(V_i)$, and $\ze{A_i}\cong \mathbb{K}$, being $\ze{A_i}=\{x\in A_i\mid xy=yx\ \text{for all }y\in A_i\}$ the center of the algebra $A_i$. This holds, in particular, if $\mathbb{K}$ is  algebraically closed.
\end{theorem}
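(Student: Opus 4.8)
This is the classical Wedderburn structure theorem, recorded here with references, so the plan is to derive all four parts from the single hypothesis that the regular right module $A$ is completely reducible, taking care throughout to track the two-sided structure.

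First I would decompose the regular module and identify the $A_i$. Writing $A$ as a direct sum of irreducible right submodules and grouping by isomorphism type — only finitely many types occur since $\dim_{\mathbb{K}}A<\infty$ — I obtain representatives $V_1,\dots,V_h$ and set $A_i:=H_A(V_i)$, so that $A=\bigoplus_{i=1}^h A_i$ with each $A_i$ a right ideal. The point for (a) is that each $A_i$ is two-sided: for $a\in A$, left multiplication $L_a\colon x\mapsto ax$ is an endomorphism of the regular right module, and any module homomorphism carries the $V_i$-homogeneous component into the $V_i$-homogeneous component, so $aA_i\subseteq A_i$. Orthogonality is then immediate, since for $i\neq j$ the product $A_iA_j$ lies in the right ideal $A_i$ and in the left ideal $A_j$, whence $A_iA_j\subseteq A_i\cap A_j=0$. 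Decomposing $1_A=e^1+\cdots+e^h$ along $A=\bigoplus A_i$ gives (d): for $a_i\in A_i$ one has $a_i=1_Aa_i=\sum_j e^ja_i=e^ia_i$ by orthogonality, and symmetrically $a_i=a_ie^i$.

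For (b) I would note that every irreducible $A$-module is a quotient of the regular module, hence by complete reducibility isomorphic to a submodule, so $V_1,\dots,V_h$ exhaust the isomorphism types; and if $V\cong V_j$ with $j\neq i$, the map $x\mapsto vx$ sends $A_i=H_A(V_i)$ into $H_V(V_i)=0$, giving $VA_i=0$. The remaining assertions of (a) — minimality of the $A_i$ and that every two-sided ideal $B$ is a sum of some of them — follow once each $A_i$ is known to be simple, by writing $B=\bigoplus_i Be^i$ with each $Be^i$ a two-sided ideal of the ring $A_i$.

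The substantive step is (c), and this is where I expect the real work. Here $A_i$ acts on the simple module $V_i$, and the action is faithful: the annihilator $J=\{a\in A_i\mid V_ia=0\}$ is a two-sided ideal with $A_iJ=0$ (as $A_i$ is a sum of copies of $V_i$, each killed by $J$), whence $J=e^iJ=0$. Schur's lemma makes $D_i=\op{End}_{A_i}(V_i)=\op{End}_A(V_i)$ a division algebra, and the Jacobson density theorem together with $\dim_{\mathbb{K}}V_i<\infty$ forces the faithful structure map $A_i\to\op{End}_{D_i}(V_i)$ to be an isomorphism; this simultaneously yields the simplicity used above. Finally, the closing clause is routine: if $D_i\cong\mathbb{K}$ then $\op{End}_{D_i}(V_i)\cong\op{End}_{\mathbb{K}}(V_i)$ is a full matrix algebra whose center consists of the scalars, so $\ze{A_i}\cong\mathbb{K}$; and if $\mathbb{K}$ is algebraically closed, then for each $d\in D_i$ the commutative subfield $\mathbb{K}(d)$ is a finite extension of $\mathbb{K}$, hence equals $\mathbb{K}$, so $D_i\cong\mathbb{K}$ automatically. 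As all of this is standard, in the paper I would simply cite \cite[Theorem (1.15) and Corollary (1.17)]{ISA} and \cite[V]{Hu}, flagging the density argument as the one substantial ingredient.
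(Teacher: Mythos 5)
Your sketch is correct, but note that the paper itself gives no proof of this statement: it is quoted as the classical Wedderburn theory, with the reader referred to Isaacs (Theorem (1.15), Corollary (1.17)) and Huppert (V, S\"atze (3.8), (4.4), (4.5)) for details. The argument you outline --- homogeneous components of the completely reducible regular module as the minimal two-sided ideals, orthogonality and the idempotent decomposition of $1_A$, and Schur's lemma plus density for the identification $A_i\cong \op{End}_{D_i}(V_i)$ and the final claim about $\ze{A_i}$ when $D_i\cong\mathbb{K}$ --- is precisely the one carried out in those references, so it is consistent with the paper's (cited) proof and contains no gaps.
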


Finally, we state Brauer's result cited in the Introduction.

\begin{theorem}\emph{\cite[Theorem~(6.32),  Corollary (6.33)]{ISA}}
\label{brauer_theorem}
Let $A$ be a group that acts on $\irr{G}$ and on the set of conjugacy classes of a group $G$. Assume that $\chi^a(g^a)=\chi(g)$ for all $\chi\in\irr{G}, g\in G$ and $a\in A$, where $g^a$ is an element of the conjugacy class $(g^G)^a$. Then for each $a\in A$, the number of fixed irreducible characters of $G$ is equal to the number  of fixed conjugacy classes.

Consequently, the numbers of orbits in the actions of $A$ on $\irr{G}$ and conjugacy classes of $G$ are equal.
\end{theorem}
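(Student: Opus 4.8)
The plan is to realise both actions of a fixed element $a\in A$ as permutations of the rows and columns of the character table of $G$, and then to read off the stated equality of fixed-point numbers from a matrix conjugacy that the hypothesis forces. First I would fix an ordering $\chi_1,\dots,\chi_k$ of $\irr{G}$ and an ordering $C_1,\dots,C_k$ of the conjugacy classes of $G$ (there are equally many, $k$, of each), and let $X=(X_{ij})$ with $X_{ij}=\chi_i(g_j)$, $g_j\in C_j$, be the character table, viewed as a $k\times k$ complex matrix. For the fixed $a$, the action on characters is a permutation $\chi_i\mapsto\chi_i^a=\chi_{\sigma(i)}$ and the action on classes is a permutation $C_j\mapsto C_j^a=C_{\tau(j)}$, for suitable $\sigma,\tau$ in the symmetric group $S_k$. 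Taking $g=g_j\in C_j$, so that $g^a\in C_{\tau(j)}$, the hypothesis $\chi^a(g^a)=\chi(g)$ becomes exactly the entrywise identity $X_{\sigma(i),\tau(j)}=X_{ij}$ for all $i,j$. The number of fixed irreducible characters of $a$ is then the number of fixed points of $\sigma$, and the number of fixed conjugacy classes is the number of fixed points of $\tau$.

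Next I would encode the identity $X_{\sigma(i),\tau(j)}=X_{ij}$ as a matrix equation $PXQ=X$, where $P$ is the permutation matrix with $P_{i,\sigma(i)}=1$ and $Q$ is the permutation matrix with $Q_{\tau(j),j}=1$; a short computation of $(PXQ)_{ij}$ confirms this. The crucial point is that $\op{tr}(P)$ equals the number of fixed points of $\sigma$ and $\op{tr}(Q)$ equals the number of fixed points of $\tau$. Since the character table $X$ is non-singular (by the first orthogonality relation its rows are linearly independent), the equation $PXQ=X$ rearranges to $P=XQ^{-1}X^{-1}$, so $P$ is conjugate to $Q^{-1}$ and hence $\op{tr}(P)=\op{tr}(Q^{-1})$. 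As $Q^{-1}$ is the permutation matrix of $\tau^{-1}$, which has the same number of fixed points as $\tau$, we get $\op{tr}(Q^{-1})=\op{tr}(Q)$. Combining, $\op{tr}(P)=\op{tr}(Q)$, which is precisely the asserted equality between the number of fixed characters and the number of fixed classes.

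Finally, for the consequence about orbit numbers I would invoke the Cauchy--Frobenius (Burnside) orbit-counting lemma for the two actions of $A$: the number of $A$-orbits on $\irr{G}$ equals $\tfrac{1}{|A|}\sum_{a\in A}|\op{Fix}_{\irr{G}}(a)|$, and likewise the number of $A$-orbits on the conjugacy classes equals the corresponding average of fixed classes. By the first part the two summands agree for every $a\in A$, so the two averages coincide, giving equality of the orbit numbers.

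The main obstacle I anticipate is purely bookkeeping: pinning down the permutation-matrix conventions so that the hypothesis really translates into $PXQ=X$ with the traces counting fixed points on the correct sides, together with the clean invocation of non-singularity of $X$ to pass from $PXQ=X$ to a conjugacy of permutation matrices. Everything else is a direct application of orthogonality and of Burnside's lemma.
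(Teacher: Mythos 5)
Your argument is correct and is precisely the standard proof of Brauer's permutation lemma: the paper itself does not reprove this statement (it cites Isaacs, Theorem (6.32)), but the proof it gives of its generalization, Theorem~\ref{Brauer}, uses exactly your strategy --- translate the hypothesis into the matrix identity $P\textsf{X}Q=\textsf{X}$ for permutation matrices, invoke non-singularity of the character table to get a conjugacy, and compare traces, then apply the orbit-counting lemma for the final assertion. No gaps; the bookkeeping you flag (conventions for $P$ and $Q$) works out exactly as you describe.
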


\begin{corollary}\label{corolario Brauer} If $N$ is a normal subgroup of a group $G$, then the numbers of orbits in the actions by conjugation of $G$ on $\irr{N}$ and the set of conjugacy classes of $N$ are equal.
\end{corollary}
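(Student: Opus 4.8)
The plan is to deduce this directly from Brauer's permutation lemma (Theorem~\ref{brauer_theorem}) by specializing the acting group $A$ in that statement to be $G$ itself, with the group being acted upon taken to be $N$, and with both actions given by conjugation. First I would record the two actions explicitly. The group $G$ acts on the right on $\irr{N}$ via $\chi\mapsto\chi^g$, where $\chi^g(x)=\chi(gxg^{-1})$ for $x\in N$; a short computation gives $(\chi^g)^h=\chi^{gh}$, confirming that this is a genuine right action, and it sends $\irr{N}$ to $\irr{N}$ as already recalled in the Introduction. Correspondingly, $G$ acts on the set of conjugacy classes of $N$ by sending $x^N$ to $(g^{-1}xg)^N$, which is again readily seen to be a right action since $((x^N)^g)^h=(x^N)^{gh}$.

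The key step is to verify the compatibility hypothesis of Theorem~\ref{brauer_theorem}, namely that $\chi^g(x^g)=\chi(x)$ for all $\chi\in\irr{N}$, all $x\in N$ and all $g\in G$, where $x^g$ denotes an element of the conjugate class $(x^N)^g$. Choosing the representative $x^g=g^{-1}xg\in(g^{-1}xg)^N$, the definition of the conjugate character yields $\chi^g(x^g)=\chi\big(g(g^{-1}xg)g^{-1}\big)=\chi(x)$, exactly as required. This is the heart of the argument, and the only real care needed is to keep the left/right conventions consistent between the two actions so that the pairing condition comes out in the precise form demanded by the theorem; this bookkeeping, rather than any substantial difficulty, is the main obstacle.

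With the hypothesis verified, Theorem~\ref{brauer_theorem} applies with $A=G$ and with the role of the acted-upon group played by $N$, and its concluding assertion gives at once that the number of orbits of $G$ on $\irr{N}$ equals the number of orbits of $G$ on the conjugacy classes of $N$. I would also note that it is harmless that conjugations by elements of $N$ (indeed of $\ce{G}{N}$) act trivially on both sets: Brauer's lemma only requires an abstract action of the group $A=G$, so the possible non-faithfulness of the conjugation action does not affect the orbit counts. Hence the stated equality follows with no further computation.
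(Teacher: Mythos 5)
Your proposal is correct and follows exactly the paper's own route: the paper likewise derives the corollary by checking that the two conjugation actions satisfy the compatibility hypothesis of Theorem~\ref{brauer_theorem} (its proof is a one-line remark to this effect), and your explicit verification of $\chi^g(x^g)=\chi(x)$ is precisely the computation being invoked there. No gaps; the only difference is that you spell out the bookkeeping the paper leaves implicit.
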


\begin{proof} Notice that, by  the definition of conjugate character, the considered actions satisfy the required condition in Theorem~\ref{brauer_theorem}.
\end{proof}

\begin{remark} According to Brauer's theorem, if $N$ is a normal subgroup of a group $G$, and we consider the actions by conjugation of $G$ on $\irr{N}$ and the set of conjugacy classes of $N$, then for each $g\in G$, the number of fixed irreducible characters of $N$ is equal to the number  of fixed conjugacy classes of $N$, but it is not difficult to find examples showing that the number of conjugacy classes of $N$ that are invariant under the action of $G$ may not coincide with the number of irreducible characters of $N$ that are $G$-invariant. For instance, let $N=\langle a\rangle \times \langle b \rangle \times \langle c \rangle $ be a $2$-elementary abelian group. Consider the action of a cyclic group $H=\langle x\rangle$ of order $4$ on $N$, in such way that $a^x=ab, b^x=bc, c^x=c$. Let $G=N \rtimes H$ be the corresponding semidirect product. One can check, for instance by using GAP, that the mentioned numbers are not equal for the normal subgroup $N$.

\end{remark}

\section{The algebra $\ze{\mathbb{K}[G]}\cap \mathbb{K}[N]$}\label{section 3}

For a conjugacy class $K$ in a group $G$, let us denote its formal sum in the group algebra by $\widehat{K}=\sum_{x\in K} x\in\mathbb{K}[G]$. It is a fact of common knowledge that the algebra $\ze{\mathbb{K}[G]}$ is generated by the set of formal sums of the conjugacy classes of $G$. Further, its dimension is equal to the number of non-isomorphic irreducible $\mathbb{K}[G]$-modules, if in addition $\mathbb {K}$ is  a splitting field for the group $G$, i.e. $\op{End}_{\mathbb {K}[G]}(V)\cong \mathbb{K}$ for any irreducible $\mathbb{K}[G]$-module $V$, in particular, if $\mathbb{K}$ is algebraically closed, and the characteristic of $\mathbb{K}$ does not divide the order of $G$.

Our main goal in this section is to carry out an analogous study for the subalgebra $\ze{\mathbb{K}[G]}\cap \mathbb{K}[N]$ for a normal subgroup $N$ of $G$. We split our development into the following two results.

\begin{lemma}
\label{lemma_classes}
Let $N$ be a normal subgroup of a group $G$, and $\mathbb{K}$ be a field. Let $\{K_1,\ldots, K_l\}$ be the set of $G$-conjugacy classes of $N$, and let
$\widehat{K_i}$ be the formal sum of $K_i$ in $\mathbb{K}[G]$, for each $i=1,\dots, l$. Then $\{\widehat{K_1},\ldots,\widehat{K_l}\}$ forms a basis of the $\mathbb{K}$-algebra $\ze{\mathbb{K}[G]}\cap \mathbb{K}[N]$.
\end{lemma}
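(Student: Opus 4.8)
The plan is to show that the proposed set is both linearly independent and spanning, leaning on the well-known basis of $\ze{\mathbb{K}[G]}$ by the $G$-class sums that was recalled just before the statement. First I would check membership: since each $K_i$ is a conjugacy class of $G$, its formal sum $\widehat{K_i}$ is a $G$-class sum and hence lies in $\ze{\mathbb{K}[G]}$; and since $K_i$ is a $G$-conjugacy class of $N$ we have $K_i\subseteq N$, so $\widehat{K_i}\in\mathbb{K}[N]$. Thus $\widehat{K_i}\in\ze{\mathbb{K}[G]}\cap\mathbb{K}[N]$ for every $i$. Linear independence is then immediate, because $K_1,\dots,K_l$ are pairwise disjoint subsets of $N$, so the elements $\widehat{K_1},\dots,\widehat{K_l}$ have pairwise disjoint supports in the group basis of $\mathbb{K}[N]$.

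The crux is the spanning assertion, and here I would use the elementary dichotomy coming from normality: every conjugacy class $K$ of $G$ satisfies either $K\subseteq N$ or $K\cap N=\emptyset$. Indeed, conjugation by any $g\in G$ maps $N$ onto itself because $N$ is normal, so a class meeting $N$ in even one element is contained in $N$; consequently the conjugacy classes of $G$ lying inside $N$ are exactly $K_1,\dots,K_l$. Now take an arbitrary $z\in\ze{\mathbb{K}[G]}\cap\mathbb{K}[N]$. As $z\in\ze{\mathbb{K}[G]}$ we may write $z=\sum_K a_K\widehat{K}$ as a $\mathbb{K}$-linear combination of the $G$-class sums, $K$ ranging over all conjugacy classes of $G$. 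Since distinct class sums have disjoint supports, comparing coefficients in the group basis and using $z\in\mathbb{K}[N]$ forces $a_K=0$ whenever $K\cap N=\emptyset$, that is, whenever $K\not\subseteq N$. Hence only the classes $K_1,\dots,K_l$ contribute and $z=\sum_{i=1}^l a_{K_i}\widehat{K_i}$, which exhibits $z$ in the span of the proposed set and completes the argument that $\{\widehat{K_1},\dots,\widehat{K_l}\}$ is a basis.

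I do not expect a genuine obstacle: once the normality dichotomy is in place the proof is essentially bookkeeping with supports. The only point meriting care is the interplay between the two descriptions of elements of $\mathbb{K}[N]$, namely one must be sure that the elements of $N$ form part of the standard group basis of $\mathbb{K}[G]$, so that ``being supported on $N$'' is literally a condition on coefficients in that basis and is compatible with the decomposition of $z$ into $G$-class sums. Finally, that the span is actually a $\mathbb{K}$-algebra rather than a mere subspace requires no separate verification, since $\ze{\mathbb{K}[G]}\cap\mathbb{K}[N]$ is an intersection of subalgebras of $\mathbb{K}[G]$.
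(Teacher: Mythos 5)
Your proposal is correct and follows essentially the same route as the paper: write an arbitrary element of $\ze{\mathbb{K}[G]}\cap\mathbb{K}[N]$ in the class-sum basis of $\ze{\mathbb{K}[G]}$ and compare supports in the group basis to kill the coefficients of classes not contained in $N$. Your explicit remark that normality forces each conjugacy class of $G$ to be either contained in $N$ or disjoint from it is a small point the paper leaves implicit, but the argument is the same.
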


\begin{proof}
Let $\{K_1, \ldots, K_l, \ldots, K_h\}$ be the set of conjugacy classes of $G$, so $K_i\subseteq N$ precisely when $1\le i\le l
$. We know that $\{\widehat{K_1}, \ldots, \widehat{K_l}\}\subseteq \ze{\mathbb{K}[G]}\cap \mathbb{K}[N]$ is a $\mathbb{K}$-linear independent set. If $x \in \ze{\mathbb{K}[G]}\cap \mathbb{K}[N] \subseteq \ze{\mathbb{K}[G]}$, then \[x = \alpha_1\widehat{K_1} + \cdots + \alpha_h\widehat{K_h},\]
for suitable $\alpha_1, \ldots, \alpha_h\in \mathbb{K}$. Moreover, $x\in \mathbb{K}[N]$ can be uniquely written as $x=\sum_{n\in N}\beta_n\, n$ with $\beta_n\in\mathbb{K}$. Since $\widehat{K_i}\subseteq N$ if $1\leq i \leq l$, we deduce that $\alpha_i=0$ whenever $l+1\leq i \leq h$, so $\{\widehat{K_1},\ldots, \widehat{K_l}\}$ is a basis of $\ze{\mathbb{K}[G]}\cap \mathbb{K}[N]$.
\end{proof}
\medskip

Next we introduce some notation, which will be used in the statement of our main result, Theorem~\ref{main_theorem}.

\begin{notation}
\label{notation} Let $\mathbb {K}$ denote a splitting field for a group $G$, whose characteristic does not divide the order of $G$, which implies by Maschke's theorem that the group algebra $\mathbb {K}[G]$ is semisimple.  Hence Theorem~\ref{wedderburn} applies for the group algebra $\mathbb {K}[G]$.

Since the number of non-isomorphic irreducible $\mathbb{K}[G]$-modules is finite, let the positive integer $k$ denote the number of equivalence classes in the equivalence relation respect to a normal subgroup $N$, on the set of irreducible $\mathbb {K}[G]$-modules (Definition~\ref{relEqMod(G)}). For each $i=1,\dots, k$, let $\{V_{i1}, \ldots, V_{i{s_i}}\}$, for some positive integer $s_i$, denote a system of pairwise non-isomorphic irreducible $\mathbb{K}[G]$-modules in the same equivalence class, and set
$$T_i:=H_{\mathbb{K}[G]}(V_{i1})\oplus \cdots \oplus H_{\mathbb{K}[G]}(V_{i{s_i}}),\ \text{and } L_i:=T_i\cap \mathbb{K}[N],$$
which are bilateral ideals of $\mathbb{K}[G]$  and $\mathbb{K}[N]$, respectively.

\end{notation}


\begin{theorem}
\label{main_theorem}
Let $N$ be a normal subgroup of a group $G$. With  Notation~\ref{notation}, the following statements hold:
\begin{enumerate}
\item[\emph{(a)}] $\mathbb{K}[N] = L_1 \oplus \cdots \oplus L_k$.
\item[\emph{(b)}] $\ze{\mathbb{K}[G]}\cap \mathbb{K}[N] = (L_1\cap \ze{T_1}) \oplus \cdots \oplus (L_k\cap \ze{T_k})$.
\item[\emph{(c)}] $\emph{dim}_{\mathbb{K}}(L_i\cap \ze{T_i})=1$, for every $i=1,\dots,k$.
\item[\emph{(d)}] $\emph{dim}_{\mathbb{K}}(\ze{\mathbb{K}[G]}\cap \mathbb{K}[N])=k$, which is equal to the number of non-isomorphic minimal $G$-invariant $\mathbb{K}[N]$-modules, as well as to the number of orbits in the action by conjugation of $G$ on the set of irreducible $\mathbb{K}[N]$-modules.
\end{enumerate}
\end{theorem}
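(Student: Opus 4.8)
The plan is to read everything off the Wedderburn decomposition of $\mathbb{K}[G]$, grouped according to the equivalence relation respect to $N$. By Notation~\ref{notation} we have $\mathbb{K}[G]=T_1\oplus\cdots\oplus T_k$, where each $T_i$ is a bilateral ideal with identity a central idempotent $e_i=1_{T_i}$ and $1=e_1+\cdots+e_k$. The first fact I would isolate is that each equivalence class $i$ corresponds, via Clifford's theorem and the correspondence recorded after Definition~\ref{relEqMod(G)}, to a single orbit $\Omega_i$ of irreducible $\mathbb{K}[N]$-modules, namely the irreducible constituents of $V_N$ for any $V$ in the class. These orbits are pairwise disjoint, and since every irreducible $\mathbb{K}[N]$-module occurs in the restriction of some irreducible $\mathbb{K}[G]$-module, they partition the whole set of irreducible $\mathbb{K}[N]$-modules. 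The goal is then to prove that $L_i$ is precisely the sum of the homogeneous components of the (semisimple) algebra $\mathbb{K}[N]$ attached to $\Omega_i$, from which (a)--(d) all follow.

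For (a), I would show that $H_{\mathbb{K}[N]}(W)\subseteq T_i$ whenever $W\in\Omega_i$. The device is that right multiplication by the central idempotent $e_{i'}$ is an endomorphism of the regular $\mathbb{K}[N]$-module, since $e_{i'}$ is central in $\mathbb{K}[G]$ and hence commutes with the right $N$-action; thus $H_{\mathbb{K}[N]}(W)e_{i'}$ is again a homogeneous $\mathbb{K}[N]$-module of type $W$ (or zero) contained in $T_{i'}$. But by Clifford's theorem the restriction $(T_{i'})_N$ involves only constituents from $\Omega_{i'}$, so this image vanishes for $i'\neq i$, forcing $H_{\mathbb{K}[N]}(W)=H_{\mathbb{K}[N]}(W)e_i\subseteq T_i$, hence into $L_i$. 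Summing over $W\in\Omega_i$ and comparing with the full decomposition $\mathbb{K}[N]=\bigoplus_W H_{\mathbb{K}[N]}(W)$ (a dimension count suffices) yields $L_i=\bigoplus_{W\in\Omega_i}H_{\mathbb{K}[N]}(W)$ and $\mathbb{K}[N]=L_1\oplus\cdots\oplus L_k$.

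Parts (b) and (c) are then short. For (b), given $z\in\ze{\mathbb{K}[G]}\cap\mathbb{K}[N]$, its components in $\mathbb{K}[G]=\bigoplus_iT_i$ agree with its components in $\mathbb{K}[N]=\bigoplus_iL_i$ because $L_i\subseteq T_i$; each such component lies in $\ze{T_i}$ and in $L_i$, hence in $L_i\cap\ze{T_i}$, and the reverse inclusion is immediate. For (c), I would first note that $L_i\cap\ze{T_i}=L_i\cap\ze{\mathbb{K}[G]}$ is exactly the set of $z\in L_i$ fixed by $G$-conjugation; being $N$-conjugation invariant, these lie in the centre $\ze{L_i}$. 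Using that $\mathbb{K}$, being a splitting field for $G$, is also one for $N$, each homogeneous component of $\mathbb{K}[N]$ has one-dimensional centre, so $\ze{L_i}$ has as basis the central primitive idempotents $f_W$, $W\in\Omega_i$. Conjugation by $G$ permutes these idempotents according to the action on $\Omega_i$, which is transitive; hence the space of $G$-fixed points is one-dimensional, spanned by $e_i=\sum_{W\in\Omega_i}f_W$. This gives $\dim_{\mathbb{K}}(L_i\cap\ze{T_i})=1$.

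Finally (d) follows by adding the dimensions in (b) using (c), giving $\dim_{\mathbb{K}}(\ze{\mathbb{K}[G]}\cap\mathbb{K}[N])=k$; the identification of $k$ with the number of minimal $G$-invariant $\mathbb{K}[N]$-modules and with the number of orbits is the correspondence after Definition~\ref{relEqMod(G)} together with Lemma~\ref{conjugate}(2). I expect the main obstacle to be the bookkeeping in matching each block $T_i$ with a single orbit $\Omega_i$ and in establishing the containment $H_{\mathbb{K}[N]}(W)\subseteq T_i$ cleanly; once the homogeneous components of $\mathbb{K}[N]$ are correctly distributed among the $T_i$, the value $1$ in (c), and hence (d), is forced by the transitivity of the $G$-action on each orbit. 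A point to treat with care is the passage from a splitting field for $G$ to one for $N$, which is precisely what guarantees that the centres of the homogeneous components of $\mathbb{K}[N]$ are one-dimensional.
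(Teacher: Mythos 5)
Your parts (a), (b) and (d) are sound, and your route to (a) --- pushing each homogeneous component $H_{\mathbb{K}[N]}(W)$ into the correct block $T_i$ by right multiplication by the central idempotents $1_{T_{i'}}$ and using that $(T_{i'})_N$ only involves constituents from the orbit $\Omega_{i'}$ --- is a clean alternative to the paper's argument, which instead compares $T_i$ with the homogeneous components of $\mathbb{K}[G]_N$ via Dedekind's identity. The problem is in (c). Your argument rests on the assertion that ``$\mathbb{K}$, being a splitting field for $G$, is also one for $N$''; this is false, even for normal subgroups. Take $G=S_3$, $N=A_3\cong C_3$ and $\mathbb{K}=\mathbb{Q}$: the field $\mathbb{Q}$ splits $S_3$, but $\mathbb{Q}[C_3]\cong \mathbb{Q}\oplus\mathbb{Q}(\zeta_3)$ has a homogeneous component whose centre is $\mathbb{Q}(\zeta_3)$, of dimension $2$ over $\mathbb{Q}$. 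Consequently $\ze{L_i}$ need not have the central primitive idempotents $f_W$, $W\in\Omega_i$, as a $\mathbb{K}$-basis; it is a direct sum of field extensions $E_W\supseteq\mathbb{K}$, the $G$-action permutes the summands and acts on each by $\mathbb{K}$-algebra automorphisms, and the fixed subspace you are computing is a priori $E_W^{I_G(W)}$ for a representative $W$, which your argument does not show equals $\mathbb{K}$. (In the $S_3$ example the answer does turn out to be $\mathbb{Q}$, because the transposition induces the nontrivial Galois automorphism of $\mathbb{Q}(\zeta_3)$, but that is an additional fact needing proof.)

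The paper sidesteps this by never passing to $\ze{L_i}$: it places $\ce{L_i}{T_i}$ inside $\ze{T_i}=\bigoplus_{j}\ze{H_{\mathbb{K}[G]}(V_{ij})}\cong \mathbb{K}\oplus\cdots\oplus\mathbb{K}$, where the one-dimensionality of each factor is guaranteed by the hypothesis that $\mathbb{K}$ splits $G$, and then shows that the projection onto a single factor $Z_{i1}$ is surjective (via an annihilator argument) and injective (via the vanishing of $\mathbb{K}[N]\cap\bigoplus_{j\in I}H_{\mathbb{K}[G]}(V_{ij})$ for proper subsets $I$). Your proof of (c) is correct, and indeed shorter, under the stronger hypothesis that $\mathbb{K}$ also splits $N$ (for instance $\mathbb{K}$ algebraically closed, which covers the case $\mathbb{K}=\mathbb{C}$ used in the rest of the paper), but as written it does not establish the theorem at the stated level of generality.
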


\begin{proof} Following  Notation~\ref{notation}, let us consider $$\mathbb{K}[G]=T_1\oplus \cdots \oplus T_i\oplus\cdots \oplus T_k= \bigoplus_{i=1}^k \big(H_{\mathbb{K}[G]}(V_{i1})\oplus \cdots \oplus H_{\mathbb{K}[G]}(V_{i{s_i}})\big).$$
According to Lemmas~\ref{Irr(G)}~and~\ref{conjugate},  for each $i=1,\dots, k$, and each $j=1,\dots, s_i$, we have that
$$(V_{ij})_N \cong e_{ij}\widehat{W}_{i1} = e_{ij}(W_{i1} \oplus \cdots \oplus W_{i{t_i}}),$$
for suitable positive integers  $e_{ij}$ and $t_i$, and being  $\widehat{W}_{i1}$ and $\{W_{i1}, \dots, W_{i{t_i}}\}$, respectively, the corresponding minimal $G$-invariant $\mathbb{K}[N]$-module and orbit in the action of $G$ by conjugation on the set of irreducible $\mathbb{K}[N]$-modules,  up to isomorphism of $\mathbb{K}[N]$-modules.

By the generalization for modules of the Frobenius reciprocity theorem, due to Nakayama, it follows that $$\bigcup_{i=1}^k \{W_{i1}, \dots, W_{i{t_i}}\}$$ is a complete system of pairwise non-isomorphic irreducible $\mathbb{K}[N]$-modules. (See~\cite[B. Theorem~6.11]{DH}.) Therefore, $$\mathbb{K}[N]= A_1\oplus \cdots \oplus A_i\oplus\cdots \oplus A_k=\bigoplus_{i=1}^k \big(H_{\mathbb{K}[N]}(W_{i1}) \oplus \cdots \oplus  H_{\mathbb{K}[N]}(W_{i{t_i}})\big),$$
being $A_i:=H_{\mathbb{K}[N]}(W_{i1}) \oplus \cdots \oplus  H_{\mathbb{K}[N]}(W_{i{t_i}})$ bilateral ideal of $\mathbb{K}[N]$,  for each $i \in \{1, \dots, k\}$.

Let $i\in \{1,\dots, k\}$. We claim that $A_i= L_i:=T_i\cap \mathbb{K}[N]$. Let $C:=\mathbb{K}[G]_N$ (that is, $C$ denotes $\mathbb{K}[G]$ viewed as  $\mathbb{K}[N]$-module). Then $$C= R_1\oplus \cdots \oplus R_i\oplus\cdots \oplus R_k=\bigoplus_{i=1}^k \big(H_{C}(W_{i1}) \oplus \cdots \oplus  H_{C}(W_{i{t_i}})\big),$$
being $R_i:=H_{C}(W_{i1}) \oplus \cdots \oplus  H_{C}(W_{i{t_i}})$, which is a $\mathbb{K}[N]$-submodule of $C$,  for each $i \in \{1, \dots, k\}$. Since $\mathbb{K}[N]\subseteq C$, it is clear that $A_i\subseteq R_i$. Besides, $T_i\subseteq R_i$. Hence, by Dedekind's identity, we have
$$R_i = R_i \cap \mathbb{K}[G] = R_i \cap (T_1 \oplus \cdots \oplus T_k) = T_i \oplus (R_i \cap \bigoplus_{j \neq i} T_j).$$

Let us prove that $S_i:=R_i \cap \bigoplus_{j \neq i} T_j=0$. Arguing by contradiction, suppose that $S_i \neq 0$, and let $W$ be an irreducible $\mathbb{K}[N]$-submodule of $S_i$. Then $W \subseteq R_i$, so $W \cong W_{is}$ for some $s \in \{1, \dots , t_i\}$. Since additionally $W \subseteq\bigoplus_{j \neq i} T_j$, we have $W \cong W_{jr}$, for some $j \neq i$ and $r \in \{1, \dots, t_j\}$, which is a contradiction. Thus $S_i=0$ and $A_i\subseteq R_i=T_i$. It can analogously be proven that $T_i \cap \bigoplus_{j\neq i} A_j=0$, so that
 $$L_i=T_i\cap \mathbb{K}[N]= T_i \cap (A_1\oplus \cdots \oplus A_k) = A_i \oplus (T_i \cap \bigoplus_{j\neq i} A_j) = A_i,$$ which proves the claim. Since $\mathbb{K}[N]= A_1\oplus \cdots \oplus A_i\oplus\cdots \oplus A_k$, Part (a) follows.

In order to prove part (b), we notice that $T_iT_j=0$, whenever $i\neq j$, from Theorem~\ref{wedderburn}(a). Then, since $\mathbb{K}[G]=T_1\oplus\cdots \oplus T_k$, it follows that $\ze{\mathbb{K}[G]}=\ze{T_1}\oplus\cdots\oplus\ze{T_k}$, and so
\begin{eqnarray*}
\mathbb{K}[N] \cap \ze{\mathbb{K}[G]}&=&(L_1 \oplus \cdots \oplus L_k) \cap (\ze{T_1} \oplus \cdots \oplus \ze{T_k}) = \\
& = & \bigoplus_{i=1}^k L_i \cap \ze{T_i}=\bigoplus_{i=1}^k \ce{L_i}{T_i},
\end{eqnarray*}
being $\ce{L_i}{T_i}=\{x\in L_i\mid xy=yx\ \text{for all } y\in T_i\}$, and where the second equality follows from the uniqueness of the decomposition of the elements within the direct sum $T_1\oplus \cdots \oplus T_k$, as  $L_i\subseteq T_i$ for each $i=1,\dots, k$. Part (b) is now proven.

Part~(d) is clearly a consequence of Part~(c) together with Lemmas~\ref{Irr(G)}~and~\ref{conjugate}. So we finally prove Part~(c). Let $i\in \{1,\dots, k\}$. We aim to prove that  $\text{dim}_{\mathbb{K}}(\ce{L_i}{T_i}) = 1$.

We claim first that $\ce{L_i}{T_i}\neq 0$. Observe that $\mathbb{K}[G]$ and $\mathbb{K}[N]$ are two $\mathbb{K}$-algebras with identity element $1=1_G=1_{\mathbb{K}[G]}=1_{\mathbb{K}[N]}$. Moreover,
$1 = 1_{L_1} + \cdots + 1_{L_{k}}=1_{T_1} + \cdots + 1_{T_{k}}$
being $1_{L_j}\in L_j$ and $1_{T_j}\in T_j$  the identity elements of the $\mathbb{K}$-algebras $L_j$ and $T_j$, respectively, for each for $j \in \{1, \dots, k\}$, by Theorem~\ref{wedderburn}(a). Since $1_{L_i} \in L_i \subseteq T_i$, it follows that $1_{L_i}=1_{T_i} \in \ce{L_i}{T_i} \neq 0$, which proves the claim.

Since $T_i:=H_{\mathbb{K}[G]}(V_{i1})\oplus \cdots \oplus H_{\mathbb{K}[G]}(V_{i{s_i}})$, we derive from Theorem~\ref{wedderburn}(a),(c), and the fact that $\mathbb{K}$ is a splitting field for $G$, that $\ze{H_{\mathbb{K}[G]}(V_{ij})}\cong \mathbb{K}$ for all $j=1,\dots,s_i$, and
$$\ce{L_i}{T_i}\subseteq \ze{T_i}=\bigoplus_{j=1}^{s_i}\ze{H_{\mathbb{K}[G]}(V_{ij})}\cong \mathbb{K}\oplus \overset{s_i}{ \cdots}\oplus \mathbb{K}.$$
For each $j=1,\dots,s_i$, set $Z_{ij}:=\ze{H_{\mathbb{K}[G]}(V_{ij})}$. Let
$$\op{in}: \ce{L_i}{T_i} \longrightarrow \bigoplus_{j=1}^{s_i} Z_{ij} \qquad\qquad \pi_{r}: \bigoplus_{j=1}^{s_i} Z_{ij} \longrightarrow Z_{ir} $$ be the natural injection and projection of the direct sum, respectively, for each $r \in \{1, \dots, s_i\}$, i.e. $\op{in}(x)=x$ for every $x\in \ce{L_i}{T_i}$, and $\pi_{r}(\sum_{j=1}^{s_i}x_j)=x_r$ for every $x_j\in Z_{ij}$, $1\le j\le s_i$. Let us consider the induced projection $\pi_r \circ \op{in}:  \ce{L_i}{T_i} \rightarrow Z_{ir}$, for each $r=1,\dots,s_i$, which is a $\mathbb{K}$-linear map.

If $s_i=1$, it is clear that $\text{dim}_{\mathbb{K}}(\ce{L_i}{T_i}) = 1$, and we are done. Hence, we may assume that $s_i\ge 2$. We show next that $\ce{L_i}{T_i}$ is a subdirect product of $\displaystyle\bigoplus_{j=1}^{s_i} Z_{ij}$, i.e. $(\pi_r \circ \op{in}) (\ce{L_i}{T_i})=Z_{ir}$ for all $r=1,\dots,s_i$.  In order to show that $\pi_r \circ \op{in}$  is onto, let us suppose by contradiction that $(\pi_r \circ \op{in}) (\ce{L_i}{T_i})=0$, for some $r\in\{1,\dots,s_i\}$. Hence
$$\ce{L_i}{T_i} \subseteq \mathbb{K}[N] \cap \left(\bigoplus_{\underset{j\neq r}{j=1}}^{s_i} Z_{ij}\right) \subseteq \mathbb{K}[N] \cap \left(\bigoplus_{\underset{j\neq r}{j=1}}^{s_i} H_{\mathbb{K}[G]}(V_{ij})\right).$$ We claim that this last intersection is trivial, which will imply  the contradiction $\ce{L_i}{T_i}=0$. More generally, we are going to prove that for every $I \subsetneq \{1, \dots, s_i\}$ it holds
$$C_i^I := \mathbb{K}[N] \cap \left(\bigoplus_{j \in I \subsetneq \{1, \dots, s_i\}}H_{\mathbb{K}[G]}(V_{ij})\right)=0.$$
Suppose that $C_i^I \neq 0$ for some $I \subsetneq \{1, \dots, s_i\}$. We notice  that $C_i^I$ is a bilateral  ideal of $\mathbb{K}[N]$ by Theorem~\ref{wedderburn}(a),(b). Moreover, $C_i^I\subseteq \mathbb{K}[N]\cap T_i=L_i=H_{\mathbb{K}[N]}(W_{i1}) \oplus \cdots \oplus  H_{\mathbb{K}[N]}(W_{i{t_i}})$. Again, by the structure of the semisimple algebra $\mathbb{K}[N]$, Theorem~\ref{wedderburn}(a),(b) implies that $C_i^I$ is sum of some of the homogenous components $H_{\mathbb{K}[N]}(W_{i1}),  \dots ,  H_{\mathbb{K}[N]}(W_{i{t_i}})$ of $\mathbb{K}[N]$. W.l.o.g. assume that $H_{\mathbb{K}[N]}(W_{i1})\subseteq C_i^I$.

 Now if we take $l \in \{1, \dots, s_i\} \setminus I$, then  $H_{\mathbb{K}[G]}(V_{il})H_{\mathbb{K}[N]}(W_{i1})=0$ by Theorem~\ref{wedderburn}(a),(b). In particular, if we consider  $V_{il}$ as $\mathbb{K}[N]$-module, then $V_{il}H_{\mathbb{K}[N]}(W_{i1})=0$, so that $H_{\mathbb{K}[N]}(W_{i1})\subseteq \op{Ann}_{\mathbb{K}[N]}(V_{il})=\{x\in \mathbb{K}[N]\mid V_{il}x=0\}$, the annihilator of $V_{il}$ as $\mathbb{K}[N]$-module. We observe that   $\op{Ann}_{\mathbb{K}[N]}(V_{il})=\bigcap_{m=1}^{t_i} \op{Ann}_{\mathbb{K}[N]}(W_{im})$, and so $$\op{Ann}_{\mathbb{K}[N]}(V_{il})=\op{Ann}_{\mathbb{K}[N]}(V_{ij})$$ for all $j \in \{1, \dots, s_i\}$. Thus, $T_iH_{\mathbb{K}[N]}(W_{i1}) =0$. In particular, since $H_{\mathbb{K}[N]}(W_{i1})\subseteq L_i\subseteq T_i$, we deduce that $H_{\mathbb{K}[N]}(W_{i1})=1_{T_i}H_{\mathbb{K}[N]}(W_{i1})=0$, which is a contradiction.
We obtain therefore that $C_i^I=0$. This implies in particular the claim, and so the contradiction $\ce{L_i}{T_i}=0$, which  proves that $\pi_r\circ \op{in}$ is onto.

We consider $\pi_1\circ \op{in}$, which is  onto with kernel
$$\ce{L_i}{T_i} \cap \op{Ker}(\pi_1) = \ce{L_i}{T_i} \cap \left(\bigoplus_{s=2}^{s_i} Z_{is}\right) \subseteq \mathbb{K}[N] \cap \left(\bigoplus_{s=2}^{s_i} H_{\mathbb{K}[G]}(V_{is})\right) = 0.$$
Hence, $\pi_1 \circ \op{in}$ is an isomorphism, and then $\op{dim}_{\mathbb{K}}(\ce{L_i}{T_i}) = 1$, as wanted.
\end{proof}

\bigskip

Certainly, Theorem~\ref{main_theorem} applies for the group algebra $\mathbb{C}[G]$ over the field $\mathbb{K}=\mathbb{C}$ of complex numbers.  Lemma \ref{lemma_classes} and Theorem \ref{main_theorem} provide in particular an  alternative proof for Corollary~\ref{corolario Brauer}, without using character theory.

\section{G-character tables of normal subgroups}\label{G-tables}

In the rest of the paper we focus on characters of groups, which are always considered over the field $\mathbb{C}$ of complex numbers. We adhere to~\cite{ISA} as main reference on this theory. We state versions for characters of Lemmas~\ref{conjugate}~and~\ref{Irr(G)}.

Let $N$ be a normal subgroup of a group $G$. Let $\theta\in \irr{N}$ and  $\{\theta^{g_i}\mid i=1,\dots, t\}$ be the orbit of $\theta$ under the action by conjugation of $G$ on $\irr{N}$, where $\{g_i\in G\mid i=1,\dots, t\}$ is a right transversal in $G$ of $I_G(\theta)$. We say that a character $\vartheta$ of $N$ is {\it minimal $G$-invariant} if it is $G$-invariant, i.e. $\vartheta^g=\vartheta$ for all $g\in G$, and there is no $G$-invariant constituent of $\vartheta$, different of $\vartheta$.

\begin{lemma}\label{conjugate-characters} With the notation above, the following assertions hold:
\begin{enumerate}\item The character $\widehat{\theta}:=\sum_{i=1}^t\theta^{g_i}$ is a minimal $G$-invariant character of $N$.
\item Every minimal $G$-invariant character of $N$  can be constructed from one orbit of irreducible characters in this way, and this defines a one-to-one correspondence between the set of orbits in the action by conjugation of $G$ on  $\irr{N}$ and the set of minimal $G$-invariant characters of $N$.
\item Every $G$-invariant character of $N$ is a sum of  minimal $G$-invariant characters of $N$.
\end{enumerate}
\end{lemma}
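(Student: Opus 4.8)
The plan is to run the module-theoretic argument of Lemma~\ref{conjugate} entirely through the character inner product, the single underlying fact being that conjugation by a fixed $g\in G$ merely permutes the conjugacy classes of $N$ and therefore acts as an isometry on the space of class functions of $N$; that is, $\langle \alpha^g,\beta^g\rangle=\langle\alpha,\beta\rangle$ for all class functions $\alpha,\beta$ of $N$. In particular, if $\vartheta$ is a $G$-invariant character of $N$ and $\theta\in\irr{N}$, then $\langle\vartheta,\theta^g\rangle=\langle\vartheta^{g^{-1}},\theta\rangle=\langle\vartheta,\theta\rangle$ for every $g\in G$, so the multiplicity of an irreducible constituent of a $G$-invariant character is constant along the whole $G$-orbit of that constituent. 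This observation drives all three parts.

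For (1), I would first note that right multiplication by any $g\in G$ permutes the cosets of $I_G(\theta)$, hence permutes the summands of $\widehat{\theta}=\sum_{i=1}^t\theta^{g_i}$ (since $(\theta^{g_i})^g=\theta^{g_ig}$ and $\{\theta^{g_ig}\}=\{\theta^{g_i}\}$ as sets), giving $\widehat{\theta}^g=\widehat{\theta}$; thus $\widehat{\theta}$ is $G$-invariant. For minimality, suppose $\psi$ is a nonzero $G$-invariant character with $\widehat{\theta}-\psi$ a character. The set of irreducible constituents of $\psi$ is a nonempty subset of the single $G$-orbit $\{\theta^{g_i}\}$ which, by the constant-multiplicity observation applied to $\psi$, is closed under $G$-conjugation; being a nonempty $G$-invariant subset of one orbit, it is the whole orbit, and each constituent occurs in $\psi$ with multiplicity $1$. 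Hence $\psi=\widehat{\theta}$, proving minimality.

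For (2), given a minimal $G$-invariant character $\vartheta$, I would pick any irreducible constituent $\theta$ of $\vartheta$; by the constant-multiplicity observation every $\theta^g$ is a constituent of $\vartheta$ with the same positive multiplicity, so $\widehat{\theta}$ is a $G$-invariant subcharacter of $\vartheta$, and minimality forces $\vartheta=\widehat{\theta}$. Thus every minimal $G$-invariant character is an orbit sum. The assignment sending the $G$-orbit of $\theta$ to $\widehat{\theta}$ is well defined (independent of the chosen transversal), surjective by the previous sentence, and injective because $\widehat{\theta}$ determines its set of irreducible constituents, namely the orbit of $\theta$; this yields the asserted bijection. For (3), writing a $G$-invariant character $\chi=\sum_{\theta\in\irr{N}}a_\theta\,\theta$, the constant-multiplicity observation gives $a_{\theta^g}=a_\theta$, so grouping constituents by $G$-orbits expresses $\chi=\sum_{O}a_O\,\widehat{\theta_O}$ as a nonnegative integral combination of the orbit sums $\widehat{\theta_O}$, each minimal $G$-invariant by~(1).

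I expect no genuine obstacle here: the statement is the exact character counterpart of Lemma~\ref{conjugate}, and the argument is routine once the isometry of conjugation on class functions is isolated. The only point requiring a little care is fixing the meaning of the ``$G$-invariant constituent'' used in the definition of minimality --- I read it as a $G$-invariant character $\psi$ with $\vartheta-\psi$ a character --- and checking that ``no proper such $\psi$'' is equivalent to ``not a sum of two nonzero $G$-invariant characters'', which is precisely what makes parts~(1) and~(3) dovetail.
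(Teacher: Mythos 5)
Your proposal is correct. The paper itself gives no proof of this lemma: it is introduced with the words ``We state versions for characters of Lemmas~\ref{conjugate}~and~\ref{Irr(G)}'', and the module-theoretic prototype, Lemma~\ref{conjugate}, is likewise only declared to be ``easily checked''. So there is no argument in the paper to compare against; what you have written is the standard way of supplying the omitted verification, and it is sound. The single fact you isolate --- that conjugation is an isometry of the space of class functions of $N$, so that $[\vartheta,\theta^g]=[\vartheta,\theta]$ for $G$-invariant $\vartheta$ and hence multiplicities are constant on $G$-orbits of $\irr{N}$ --- does drive all three parts, and your parenthetical care about reading ``$G$-invariant constituent'' as a $G$-invariant character $\psi$ with $\vartheta-\psi$ again a character (rather than an irreducible constituent) is exactly the reading needed for part~(2) to hold, since otherwise a sum of two distinct orbit sums with nontrivial orbits would vacuously be ``minimal''. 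The only quibble is cosmetic: in the minimality step of part~(1) you should state explicitly that the multiplicity of each $\theta^{g_i}$ in $\psi$ is at most $1$ because $\widehat{\theta}-\psi$ is a character and each $\theta^{g_i}$ occurs in $\widehat{\theta}$ exactly once; this is implicit in your argument but worth a clause.
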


From now on, for any $\theta\in \irr{N}$ and with the previous notation, we write $\widehat{\theta}=\sum_{i=1}^t\theta^{g_i}$ for the minimal $G$-invariant character of $N$ corresponding to the orbit of $\theta$ in the action by conjugation of $G$ on  $\irr{N}$.

\begin{lemma}\label{Irr(G)-characters} Let $N$ be a normal subgroup of a group $G$. For $\chi,\phi\in \irr{G}$ the following statements are equivalent:
\begin{itemize}\item[(i)] $[\chi_N,\phi_N]\neq 0$.
\item[(ii)] There exist $\theta\in \irr{N}$ and positive integers $e_\chi$ and $e_\phi$ such that $\chi_N= e_\chi\, \widehat{\theta}$ and $\phi_N= e_\phi\, \widehat{\theta}$.
\item[(iii)] There is a rational number $c$  such that $\chi_N= c\, \phi_N$.
\end{itemize}
\end{lemma}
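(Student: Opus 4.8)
The plan is to prove the three implications cyclically, in the order $(ii)\Rightarrow(i)\Rightarrow(iii)\Rightarrow(ii)$, using Clifford's theorem as the underlying engine. The character-theoretic version of Clifford's theorem tells us that for any $\chi\in\irr{G}$, the restriction $\chi_N$ is a multiple of a single minimal $G$-invariant character: precisely, if $\theta\in\irr{N}$ is a constituent of $\chi_N$, then $\chi_N=e_\chi\,\widehat{\theta}$ for some positive integer $e_\chi$ (the ramification number). This is the exact analogue of Lemma~\ref{Irr(G)}, which was already established on the module side, so the work here is essentially to transcribe that argument into the language of characters using the inner product $[\,\cdot\,,\,\cdot\,]$ on class functions.

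For $(ii)\Rightarrow(i)$: assuming $\chi_N=e_\chi\,\widehat{\theta}$ and $\phi_N=e_\phi\,\widehat{\theta}$ with the same minimal $G$-invariant character $\widehat\theta$, I would compute $[\chi_N,\phi_N]=e_\chi e_\phi[\widehat\theta,\widehat\theta]$. Since $\widehat{\theta}=\sum_{i=1}^t\theta^{g_i}$ is a sum of $t\ge 1$ distinct (hence orthonormal) irreducible characters of $N$, we get $[\widehat\theta,\widehat\theta]=t\ge 1$, so $[\chi_N,\phi_N]=e_\chi e_\phi t\neq 0$, giving $(i)$. For $(i)\Rightarrow(iii)$: the hypothesis $[\chi_N,\phi_N]\neq 0$ means $\chi_N$ and $\phi_N$ share an irreducible constituent $\theta\in\irr{N}$. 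Clifford's theorem then forces $\chi_N=e_\chi\,\widehat\theta$ and $\phi_N=e_\phi\,\widehat\theta$ for the same $\widehat\theta$, since once $\theta$ is a constituent the whole $G$-orbit of $\theta$ appears and nothing else does. Thus $\chi_N=(e_\chi/e_\phi)\,\phi_N$, and $c:=e_\chi/e_\phi$ is a rational number, yielding $(iii)$. Finally $(iii)\Rightarrow(ii)$: if $\chi_N=c\,\phi_N$, apply Clifford's theorem separately to write $\chi_N=e_\chi\,\widehat\theta$ and $\phi_N=e_\phi\,\widehat{\theta'}$ for minimal $G$-invariant characters $\widehat\theta,\widehat{\theta'}$; the relation $e_\chi\,\widehat\theta=c\,e_\phi\,\widehat{\theta'}$ between characters, together with linear independence of distinct irreducible constituents, forces $\widehat\theta=\widehat{\theta'}$ (so the orbits coincide), which is exactly $(ii)$.

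The main obstacle I anticipate is the step $(iii)\Rightarrow(ii)$, specifically ruling out degenerate bookkeeping: one must argue that the equality $\chi_N=c\,\phi_N$ of honest characters forces the two minimal $G$-invariant characters attached to $\chi$ and $\phi$ to be \emph{identical}, not merely proportional. This is where the orthonormality of $\irr{N}$ does the essential work: two minimal $G$-invariant characters $\widehat\theta$ and $\widehat{\theta'}$ are either equal or have disjoint sets of irreducible constituents (their orbits either coincide or are disjoint), so a proportionality $e_\chi\,\widehat\theta=c\,e_\phi\,\widehat{\theta'}$ with positive coefficients is possible only when the orbits agree. I would also note in passing that $c$ is automatically positive and rational, being a ratio of positive integers, which is why the statement can afford to say merely ``rational number'' in $(iii)$ while the proof produces something sharper. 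Throughout, the heavy lifting is already packaged in Lemma~\ref{conjugate-characters} and the correspondence between orbits and minimal $G$-invariant characters, so no genuinely new computation is needed beyond invoking Clifford's theorem and the inner product.
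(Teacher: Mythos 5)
Your proof is correct, and it follows exactly the route the paper intends: the paper states this lemma without proof as the character-theoretic transcription of Lemma~\ref{Irr(G)}, leaving the details to Clifford's theorem, which is precisely the engine you use. Your explicit handling of $(iii)\Rightarrow(ii)$ via the fact that two minimal $G$-invariant characters either coincide or have disjoint orbits of constituents is the right way to fill in the one step the paper leaves implicit.
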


The previous lemma allows to introduce an equivalence relation on the set $\irr{G}$ as follows:

\begin{definition}\label{relEqIrr(G)}  Let $N$ be a normal subgroup of a group $G$. Two irreducible characters  $\chi,\phi$ of $G$ are defined to be {\it equivalent respect to $N$} if they satisfy any of the conditions in Lemma~\ref{Irr(G)-characters}.
\end{definition}

It is worth emphasizing the following facts: The equivalence class of each $\chi\in \irr{G}$ is associated to a  minimal $G$-invariant character  $\widehat{\theta}$ of $N$, with $\theta$ an irreducible constituent of $\chi_N$, and this defines  a one-to-one correspondence between the set of equivalence classes of irreducible characters of $G$ and the set of minimal $G$-invariant characters of $N$.
    Besides, the equivalence class of each $\chi\in \irr{G}$ is the set $\irr{G |\theta}=\{\phi\in \irr{G}\mid [\phi_N,\theta]\neq 0\}$.

In view of the previous two results, we introduce the following concept.

\begin{definition}\label{G-character table}
Let $N$ be a normal subgroup of a group $G$. A $G$\textbf{-character table of }$N$ is any (square) matrix $\textsf{X}=(x_{ij}) \in \textup{M}_k(\mathbb{C})$ with entries $$x_{ij}=\chi_i(n_j),\quad 1\leq i,j \leq k, $$ where  $\{n_1^G, \ldots, n_k^G\}$  is the set of $G$-conjugacy classes of $N$ and $\Delta = \{\chi_1=1_G, \chi_2, \dots, \chi_k\}$ denotes a representative system of the equivalence classes in the equivalence relation respect to $N$, defined on $\irr{G}$.

\end{definition}

 In the next results we will adhere to the following notation.

\begin{notation}\label{notation_tables}
Let $N$ be a normal subgroup of a group $G$. Denote:
\begin{itemize}\item  $\{n_1^G, \ldots, n_k^G\}$  the set of $G$-conjugacy classes of $N$;
 \item $\textsf{D}=(d_{ij})$ a diagonal matrix with entries $d_{ij}=\delta_{ij}|n_i^G|$, where $\delta_{ij}$ is the Kronecker delta function, for $i,j\in \{1, \ldots, k\}$;
\item $\Delta = \{\chi_1=1_G, \chi_2, \dots, \chi_k\}$ a representative system of the equivalence classes in the equivalence relation respect to $N$, defined on $\irr{G}$;
\item $\Omega= \{\theta_i \in \irr{N} \mid 1 \leq i \leq k\}$ such that $\chi_i \in \irr{G|\theta_i}$, for each $i \in \{1, \dots, k\}$;
\item $t_i=|G:I_{G}(\theta_i)|$, $e_{i} = [(\chi_i)_N, \theta_i]\neq 0$, $1\le i\le k$.
\item $\textsf{X}$ the $G$-character table of $N$ constructed from $\Delta$, and $\overline{\textsf{X}}^t$ its transposed conjugate matrix; i.e if $\textsf{X}=(x_{ij})$, then $\overline{\textsf{X}}^t=(y_{ij})$ being $y_{ij}=\overline{x_{ji}}$ the complex conjugate of $x_{ji}$, $1\le i,j\le k$.
\item  $\Lambda_{\textsf{X}}=\text{diag}(\lambda_1,\dots,\lambda_k)$ the  diagonal matrix with entries $\lambda_i=\abs{N}t_ie_i^2$, $ 1\le i\le k.$
\end{itemize}
\end{notation}

With this notation, we observe that the number of possible $G$-character tables (not necessarily different) of $N$ is $\prod_{i=1}^{k}|\irr{G |\theta_i}|,$ where  $\theta_i\in \irr{N}$ such that $[{\chi_i}_N,\theta _i]\neq 0$, for each $i=1,\dots, k$.
\smallskip

If we consider the submatrix $\textsf{Y}$ of the character table of $G$ defined by the columns corresponding to the $G$-conjugacy classes of the normal subgroup $N$, then  by Lemma~\ref{Irr(G)-characters}  one just needs to look at the proportional rows by rational numbers of $\textsf{Y}$  to identify the equivalence classes in the equivalence relation respect to $N$, defined on $\irr{G}$, and to know their sizes as well as the relations between the ramification numbers corresponding to equivalent irreducible characters of $G$ respect to $N$. Indeed, in Theorem~\ref{square}, it will be proven  that $G$-character tables are non-singular matrices. Therefore, proportional rows of  $\textsf{Y}$ identify each equivalence class on $\irr{G}$, and this proportionality is only possible by rational numbers. We gather those facts in the next corollary.

\begin{corollary}\label{sizes} Let $N$ be a normal subgroup of a group $G$. The equivalence classes in the equivalence relation respect to $N$, defined on $\irr{G}$, are detected in the character table of the group $G$. In particular, with Notation~\ref{notation_tables}, the size of $\irr{G|\theta_i}$, for every $\theta_i\in \Omega$, is determined by the character table of $G$, as well as the rational numbers $\frac{e_\chi}{e_\phi}$ whenever $\chi,\phi\in \irr{G|\theta_i}$,  with $\chi_N= e_\chi\, \widehat{\theta_i}$ and $\phi_N= e_\phi\, \widehat{\theta_i}$.
\end{corollary}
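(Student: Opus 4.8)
The plan is to read off everything from a single submatrix of the character table of $G$, converting proportionality of rows into the equivalence relation respect to $N$ via Lemma~\ref{Irr(G)-characters}. First I would recall that $N$ itself is visible in the character table: it is the intersection of the kernels of the irreducible characters of $G$ that contain it, and since $\ker\chi=\{g\in G\mid \chi(g)=\chi(1)\}$ is read column by column, one recovers exactly which conjugacy classes of $G$ are contained in $N$. These are precisely the $G$-conjugacy classes $n_1^G,\dots,n_k^G$ of $N$, so the columns of $\textsf{Y}$ are identified intrinsically from the table, and $\textsf{Y}$ itself is the submatrix of the character table whose rows run over all of $\irr{G}$ and whose columns are these classes.

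Next I would observe that the row of $\textsf{Y}$ indexed by $\chi\in\irr{G}$, namely $(\chi(n_1),\dots,\chi(n_k))$, determines the restriction $\chi_N$ completely as a class function of $N$. Indeed, $N$ is the union of the $G$-conjugacy classes $n_j^G$, and $\chi$ is constant on each of them because it is a class function of $G$; hence the values $\chi(n_j)$ prescribe $\chi_N$ on all of $N$. Consequently, two rows of $\textsf{Y}$ are proportional by a nonzero scalar if and only if the class functions $\chi_N$ and $\phi_N$ are proportional. Note that no row is zero, since its entry at the column of the class $1^G=\{1\}$ equals $\chi(1)>0$, so the comparison is unambiguous.

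The heart of the argument is then a direct application of Lemma~\ref{Irr(G)-characters}. If the rows for $\chi$ and $\phi$ are proportional, say $\chi_N=c\,\phi_N$ with $c\neq 0$, then $[\chi_N,\phi_N]=c\,[\phi_N,\phi_N]\neq 0$, so $\chi$ and $\phi$ are equivalent respect to $N$ by condition~(i); condition~(ii) then yields $\theta\in\irr{N}$ with $\chi_N=e_\chi\widehat{\theta}$ and $\phi_N=e_\phi\widehat{\theta}$, whence $c=e_\chi/e_\phi\in\mathbb{Q}$. Conversely, equivalent characters have proportional restrictions by condition~(iii). Thus the equivalence classes respect to $N$ are exactly the maximal families of mutually proportional rows of $\textsf{Y}$, the proportionality being possible only by rational numbers; this gives the first assertion and identifies each $\irr{G|\theta_i}$ with one such family, so that $|\irr{G|\theta_i}|$ is the number of rows in that family, read off from the table.

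Finally, the ramification ratios are immediate: for $\chi,\phi\in\irr{G|\theta_i}$ with $\chi_N=e_\chi\widehat{\theta_i}$ and $\phi_N=e_\phi\widehat{\theta_i}$, the proportionality constant obtained from any nonzero column, for instance $\chi(1)/\phi(1)$ at the identity class, equals $e_\chi/e_\phi$, so these rationals are determined by the character table. I expect the only delicate point to be ensuring that proportionality among the full rows of $\textsf{Y}$ never occurs across distinct equivalence classes; this is exactly what the backward implication of Lemma~\ref{Irr(G)-characters} guarantees (and it is consistent with, though not dependent on, the non-singularity of the square $G$-character tables proved in Theorem~\ref{square}), so that the detection produces neither false positives nor false negatives.
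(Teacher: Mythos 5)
Your proposal is correct and follows essentially the same route as the paper, which derives the corollary from the preceding discussion: restrict to the submatrix $\textsf{Y}$ of columns indexed by the $G$-conjugacy classes of $N$ and apply Lemma~\ref{Irr(G)-characters} to identify equivalence classes with maximal families of proportional rows, the proportionality constants being the ratios $e_\chi/e_\phi$. Your observation that any nonzero complex proportionality already forces $[\chi_N,\phi_N]\neq 0$ (and hence rationality of the scalar) is a nice self-contained justification of the point the paper settles by appealing to the non-singularity result of Theorem~\ref{square}, but the argument is substantively the same.
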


\begin{remark}\label{zeros} We  emphasize the information that $G$-character tables of normal subgroups can contribute in the study of the normal structure of groups and the influence of $G$-conjugacy classes. For instance, from Theorem~A of \cite{FGS} one easily deduces that if a $G$-character table (and therefore all of them) of a normal subgroup $N$ of the group $G$ has no zeros, then $N$ is nilpotent.
\end{remark}

\begin{theorem}\label{square}
With  Notation~\ref{notation_tables}, it holds that  \textup{$\Lambda_{\textsf{X}}=\textsf{X}\textsf{D}\overline{\textsf{X}}^t.$} In particular, \textup{$\textsf{X}$} is non-singular.
\end{theorem}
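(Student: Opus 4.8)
The plan is to compute the entries of the matrix product $\textsf{X}\textsf{D}\overline{\textsf{X}}^t$ directly and recognise them as inner products of restricted characters taken in $N$. Writing out the $(a,b)$-entry and using $d_{jj}=|n_j^G|$,
$$(\textsf{X}\textsf{D}\overline{\textsf{X}}^t)_{ab} = \sum_{j=1}^k x_{aj}\,d_{jj}\,\overline{x_{bj}} = \sum_{j=1}^k |n_j^G|\,\chi_a(n_j)\,\overline{\chi_b(n_j)}.$$
Since $\chi_a$ and $\chi_b$ are constant on each $G$-conjugacy class $n_j^G$, and these classes partition $N$, the right-hand side collapses to a sum over the whole normal subgroup:
$$(\textsf{X}\textsf{D}\overline{\textsf{X}}^t)_{ab} = \sum_{x\in N}\chi_a(x)\,\overline{\chi_b(x)} = \abs{N}\,[(\chi_a)_N,(\chi_b)_N],$$
the inner product being taken in $N$. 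The whole computation thus reduces to evaluating these restricted inner products.

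For the off-diagonal case $a\neq b$, I would invoke the choice of $\Delta$: distinct representatives lie in different equivalence classes respect to $N$, so by Lemma~\ref{Irr(G)-characters} the restrictions $(\chi_a)_N$ and $(\chi_b)_N$ share no common irreducible constituent, whence $[(\chi_a)_N,(\chi_b)_N]=0$ and the off-diagonal entries vanish. For the diagonal entries I would apply Lemma~\ref{Irr(G)-characters}(ii) to write $(\chi_a)_N = e_a\,\widehat{\theta_a}$ with $\widehat{\theta_a}=\sum_{i=1}^{t_a}\theta_a^{g_i}$ the minimal $G$-invariant character attached to $\chi_a$. As the conjugates $\theta_a^{g_1},\dots,\theta_a^{g_{t_a}}$ are pairwise distinct irreducible characters of $N$, they are orthonormal, so $[\widehat{\theta_a},\widehat{\theta_a}]=t_a$ and hence $[(\chi_a)_N,(\chi_a)_N]=e_a^2\,t_a$. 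Combining gives $(\textsf{X}\textsf{D}\overline{\textsf{X}}^t)_{aa} = \abs{N}\,e_a^2\,t_a = \lambda_a$, which is exactly the identity $\textsf{X}\textsf{D}\overline{\textsf{X}}^t = \Lambda_{\textsf{X}}$.

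Non-singularity then follows at once. The diagonal matrix $\Lambda_{\textsf{X}}$ has strictly positive entries $\lambda_a = \abs{N}t_a e_a^2 > 0$, hence is invertible; since $\textsf{X}\textsf{D}\overline{\textsf{X}}^t$ equals this invertible matrix, $\textsf{X}$ cannot be singular. Equivalently, taking determinants yields $\abs{\det \textsf{X}}^2\,\det\textsf{D} = \prod_{a=1}^k \lambda_a \neq 0$, and because $\det\textsf{D} = \prod_{i=1}^k \abs{n_i^G}\neq 0$ we again obtain $\det\textsf{X}\neq 0$.

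I do not expect a serious obstacle: the argument is essentially a weighted column-orthogonality computation transported to the $G$-character table. The two points that require care are (i) the passage from the weighted sum over class representatives to the unweighted sum over all of $N$, which relies on the $G$-conjugacy classes partitioning $N$ together with characters being class functions, and (ii) the orthonormality of the orbit $\{\theta_a^{g_i}\}$, which is precisely what produces the factor $t_a$ in $\lambda_a$ rather than some other combinatorial quantity. Both are routine consequences of Lemma~\ref{Irr(G)-characters} and the standard orthogonality relations for the irreducible characters of $N$.
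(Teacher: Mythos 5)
Your proposal is correct and follows essentially the same route as the paper: both reduce the $(a,b)$-entry of $\textsf{X}\textsf{D}\overline{\textsf{X}}^t$ to $\sum_{x\in N}\chi_a(x)\overline{\chi_b(x)}$ via constancy on the $G$-classes partitioning $N$, then evaluate this using Clifford's decomposition $\chi_N=e\,\widehat{\theta}$ and the orthogonality relations for $\irr{N}$ to get $0$ off the diagonal and $|N|e_a^2t_a$ on it. The only cosmetic difference is that you package the computation as the inner product $[(\chi_a)_N,(\chi_b)_N]$ and invoke Lemma~\ref{Irr(G)-characters} for the vanishing, where the paper expands the double sum over the orbits explicitly.
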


\begin{proof}
Let $\chi_i, \chi_j \in \Delta$. Then there exist elements $g_1=1, g_2, \ldots, g_{t_i}, h_1=1, h_2, \ldots, h_{t_j}\in G$ such that
$$\chi_i(x)=e_{i}(\theta_i^{g_1}(x)+ \theta_i^{g_2}(x)+\cdots+\theta_i^{g_{t_i}}(x)), $$
and
$$\chi_j(x)=e_{j}(\theta_j^{h_1}(x)+ \theta_j^{h_2}(x)+\cdots+\theta_j^{h_{t_j}}(x)),$$ for all $x\in N$.
If $i \neq j$, then by the orthogonality relations we obtain
\begin{eqnarray*}
\sum_{x \in N}\chi_i(x)\overline{\chi_j(x)}&=&\sum_{x \in N} e_{i} e_{j} \sum_{r=1}^{t_i} \sum_{s=1}^{t_j} \theta_{i}^{g_r}(x)\overline{\theta_j^{h_s}(x)}  \\
&=& e_{i} e_{j}\sum_{r=1}^{t_i} \sum_{s=1}^{t_j} \left( \sum_{x \in N} \theta_{i}^{g_r}(x)\overline{\theta_j^{h_s}(x)}\right)   \\
& = & e_{i} e_{j}\sum_{r=1}^{t_i} \sum_{s=1}^{t_j}  \abs{N}[\theta_{i}^{g_r},\theta_j^{h_s}]=0.
\end{eqnarray*}
If $i=j$, then we get
\begin{eqnarray*}
\sum_{x \in N}\chi_i(x)\overline{\chi_i(x)}&=&\sum_{x \in N} e_{i}^2 \sum_{r=1}^{t_i} \sum_{s=1}^{t_i} \theta_{i}^{g_r}(x)\overline{\theta_i^{g_s}(x)}=e_{i}^2\sum_{r=1}^{t_i} \sum_{s=1}^{t_i} \left( \sum_{x \in N} \theta_{i}^{g_r}(x)\overline{\theta_i^{g_s}(x)}\right) = \\
& = & e_{i}^2\sum_{r=1}^{t_i} \left( \sum_{x \in N} \theta_{i}^{g_r}(x)\overline{\theta_i^{g_r}(x)}\right) = e_{i}^2t_i\abs{N}.
\end{eqnarray*}
Thus, if $\chi_i, \chi_j \in \Delta$, it follows $e_{i} ^2 t_i |N| \delta_{ij}=\sum_{x \in N} \chi_i(x)\overline{\chi_j(x)}=\sum_{s=1}^k|n_s^G|\chi_i(n_s)\overline{\chi_j(n_s)}$,
and the first assertion follows. In particular, the determinant of $\textsf{X}\textsf{D}\overline{\textsf{X}}^t$ is non-zero, and therefore $\textsf{X}$ is non-singular.
\end{proof}
\smallskip

As a first consequence we notice  that a  $G$-character table of $N$ allows to know whether an irreducible character of $G$ restricts into an irreducible character of $N$.

\begin{corollary}\label{restriction} With  Notation~\ref{notation_tables}, let $\chi\in \irr{G}$ and $i\in \{1,\dots,k\}$ such that $\chi=\chi_i\in \Delta$. Then $\chi_N\in \irr{N}$ if and only if  $\lambda _i=|N|$.
\end{corollary}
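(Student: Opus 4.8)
The plan is to read off the result directly from the already-proved identity $\Lambda_{\textsf{X}}=\textsf{X}\textsf{D}\overline{\textsf{X}}^t$ of Theorem~\ref{square}, which tells us that $\lambda_i=|N|\,t_i\,e_i^2$ for each $i$. Since $\chi=\chi_i\in\Delta$ lies in $\irr{G|\theta_i}$, Lemma~\ref{Irr(G)-characters} gives $\chi_N=e_i\,\widehat{\theta_i}$, where $\widehat{\theta_i}=\sum_{r=1}^{t_i}\theta_i^{g_r}$ is the minimal $G$-invariant character attached to the orbit of $\theta_i$. Thus everything reduces to the equivalence $\lambda_i=|N|\iff t_ie_i^2=1$, and the task is to show that the latter holds exactly when $\chi_N$ is irreducible.

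First I would observe that, since $t_i=|G:I_G(\theta_i)|\ge 1$ and $e_i=[\chi_N,\theta_i]\ge 1$ are positive integers, the product $t_ie_i^2$ is a positive integer that equals $1$ if and only if $t_i=1$ and $e_i=1$ simultaneously. Hence $\lambda_i=|N|$ is equivalent to the conjunction $t_i=e_i=1$, and it remains only to identify this conjunction with the irreducibility of $\chi_N$.

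For the reverse direction, if $\chi_N\in\irr{N}$ then $\chi_N=\theta_i$ is a single irreducible constituent, forcing $e_i=[\chi_N,\theta_i]=1$; moreover $\chi_N=e_i\widehat{\theta_i}=\widehat{\theta_i}=\sum_{r=1}^{t_i}\theta_i^{g_r}$ is irreducible only if the orbit sum consists of a single term, i.e. $t_i=1$. Conversely, if $t_i=e_i=1$, then $\chi_N=e_i\widehat{\theta_i}=\theta_i\in\irr{N}$. This establishes the equivalence $\chi_N\in\irr{N}\iff t_ie_i^2=1\iff\lambda_i=|N|$.

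The argument is essentially immediate once Theorem~\ref{square} is in hand, so there is no real obstacle; the only point requiring a moment's care is the bookkeeping that $\chi_N$ being a genuine irreducible character means both that the inertia orbit is trivial ($t_i=1$) and that the ramification number is one ($e_i=1$), rather than merely one of these. I would present the proof simply by substituting the values into $\lambda_i=|N|\,t_i\,e_i^2$ and invoking that $t_i,e_i$ are positive integers.
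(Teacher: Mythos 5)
Your proof is correct and is exactly the argument the paper intends: the corollary is stated as an immediate consequence of Theorem~\ref{square}, using $\lambda_i=|N|\,t_i\,e_i^2$ together with the observation that $\chi_N=e_i\widehat{\theta_i}$ is irreducible precisely when $t_i=e_i=1$, equivalently when $[\chi_N,\chi_N]=t_ie_i^2=1$. Nothing is missing, and your care in noting that \emph{both} $t_i=1$ and $e_i=1$ are needed is the right bookkeeping.
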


From  Theorem~\ref{square} we deduce that some arithmetical relations between the aforementioned integers $t_i$, $e_{i}$ and $\theta_i(1)$ can be read off the matrix $\Lambda_{\textsf{X}}$, for the  $G$-character table $\textsf{X}$ of $N$.

\begin{corollary}\label{relations}
 With Notation~\ref{notation_tables}, the next integer relations hold, where the corresponding right sides can be computed from the  $G$-character table $\textsf{X}$ of $N$:
\begin{align}
e_{i}^2t_i&=\dfrac{\lambda_i}{\abs{N}} \tag{$A_i$} \\
t_i\theta_i(1)^2&=\dfrac{\abs{N}\chi_i(1)^2}{\lambda_i} \tag{$B_i$} \\
\dfrac{\theta_i(1)}{e_{i}}&=\dfrac{\abs{N}\chi_i(1)}{\lambda_i} \tag{$C_i$}
\end{align}
$1\le i\le k$.
\end{corollary}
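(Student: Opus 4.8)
The plan is to reduce all three identities to two facts that are already available: the defining formula $\lambda_i=\abs{N}t_ie_i^2$ recorded in Notation~\ref{notation_tables}, and the degree relation $\chi_i(1)=e_it_i\theta_i(1)$. Once these are in hand, $(A_i)$, $(B_i)$ and $(C_i)$ are nothing more than three rearrangements of the same two equations, so the whole proof is a short computation rather than a genuine argument.

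First I would establish the degree relation. By Lemma~\ref{Irr(G)-characters}(ii), since $\chi_i\in\irr{G|\theta_i}$ we have $(\chi_i)_N=e_i\,\widehat{\theta_i}$ with $\widehat{\theta_i}=\sum_{r=1}^{t_i}\theta_i^{g_r}$. Each conjugate character has the same degree as $\theta_i$, because $\theta_i^{g_r}(1)=\theta_i(g_r\,1\,g_r^{-1})=\theta_i(1)$; hence $\widehat{\theta_i}(1)=t_i\theta_i(1)$, and evaluating the Clifford decomposition at the identity gives
\[
\chi_i(1)=e_i\,\widehat{\theta_i}(1)=e_it_i\theta_i(1).
\]
This single identity, together with $\lambda_i=\abs{N}t_ie_i^2$, carries all the arithmetic content of the corollary.

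The remaining steps are pure substitution. Dividing $\lambda_i=\abs{N}t_ie_i^2$ by $\abs{N}$ yields $(A_i)$. For $(B_i)$, squaring the degree relation gives $\chi_i(1)^2=e_i^2t_i^2\theta_i(1)^2$, and inserting $\lambda_i=\abs{N}t_ie_i^2$ into $\abs{N}\chi_i(1)^2/\lambda_i$ cancels $\abs{N}$, $e_i^2$ and one factor $t_i$, leaving $t_i\theta_i(1)^2$; the same cancellation applied to $\abs{N}\chi_i(1)/\lambda_i$ produces $\theta_i(1)/e_i$, which is $(C_i)$. Finally, to justify that the right-hand sides are read off $\textsf{X}$, I would note that $\lambda_i$ is the $i$-th diagonal entry of $\Lambda_{\textsf{X}}=\textsf{X}\textsf{D}\overline{\textsf{X}}^t$ by Theorem~\ref{square}, that $\chi_i(1)$ is the entry of $\textsf{X}$ in the column of the identity class, and that $\abs{N}=\lambda_1$ since $\chi_1=1_G$ forces $t_1=e_1=1$. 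The main point deserving care is therefore not an obstacle but a consistency check: verifying $\widehat{\theta_i}(1)=t_i\theta_i(1)$ guarantees that the integers $t_i$, $e_i$ and $\theta_i(1)$ on the right are exactly the invariants encoded in $\Lambda_{\textsf{X}}$.
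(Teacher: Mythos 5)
Your proposal is correct and follows essentially the same route as the paper: the authors likewise derive all three identities from the definition $\lambda_i=\abs{N}t_ie_i^2$ together with $\chi_i(1)=e_it_i\theta_i(1)$, and appeal to Theorem~\ref{square} for the computability of the right-hand sides. Your extra remarks (the verification $\widehat{\theta_i}(1)=t_i\theta_i(1)$ and the observation that $\abs{N}=\lambda_1$) only make explicit what the paper leaves implicit.
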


\begin{proof} The relation $(A_i)$ follows directly from the definition of $\lambda_i$, $1\le i\le k$, as well as $(B_i)$ and $(C_i)$, using in addition the fact that $\chi_i(1)=e_{i}t_i\theta_i(1)$, for every $i$. The corresponding right sides of these equations can be computed from  the  $G$-character table $\textsf{X}$ of $N$, since 
$\Lambda_{\textsf{X}}=\textsf{X}\textsf{D}\overline{\textsf{X}}^t$ by Theorem~\ref{square}.
\end{proof}

Let $\widehat{\theta}$ be a minimal $G$-invariant character of $N$, $\theta\in \irr{N}$. Then, with Notation~\ref{notation_tables}, $\widehat{\theta}=\widehat{\theta}_i$ for some $\theta_i\in \irr{N}$, $i\in \{1,\dots,k\}$. We notice that $\widehat{\theta}_i(1)=t_i\theta_i(1)$, and so the prime divisors of $\widehat{\theta}(1)$ are known from equation  $(B_i)$ in Corollary~\ref{relations}. Hence, the following result follows, together with It\^o-Michler's theorem,  which asserts that, for a group $X$, if a prime $p$ does not divide $\chi(1)$ for all $\chi\in \irr{X}$,  then $X$ has an abelian normal  Sylow $p$-subgroup (\cite[Theorem 5.4]{I-M}).

\begin{corollary}\label{primesinvariants} Let $N$ be a normal subgroup of a group $G$. Then the prime divisors of $\widehat{\theta}(1)$ for every minimal $G$-invariant character  $\widehat{\theta}$ of $N$   are known from the character table of $G$. Moreover, if a prime $p$ does not divide $\widehat{\theta}(1)$ for every minimal $G$-invariant character $\widehat{\theta}$ of $N$, then $N$ has an abelian normal  Sylow $p$-subgroup.
\end{corollary}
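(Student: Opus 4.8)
The plan is to treat the two assertions separately: the first is a bookkeeping consequence of Corollary~\ref{relations}, while the second reduces to It\^o--Michler's theorem applied to $N$ itself.

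For the first assertion, I would begin from equation $(B_i)$, namely $t_i\theta_i(1)^2=\abs{N}\chi_i(1)^2/\lambda_i$, together with the identity $\widehat{\theta}_i(1)=t_i\theta_i(1)$. The key elementary observation is that a prime $p$ divides $t_i\theta_i(1)$ if and only if it divides $t_i\theta_i(1)^2$, since squaring the factor $\theta_i(1)$ leaves the set of prime divisors unchanged. Hence the set of primes dividing $\widehat{\theta}_i(1)$ coincides with the set of primes dividing the right-hand side $\abs{N}\chi_i(1)^2/\lambda_i$. It then remains to check that this right-hand side is computable from the character table of $G$: the degrees $\chi_i(1)$ are entries of the table; $\abs{N}$ is recovered as $\sum_{s=1}^{k}\abs{n_s^G}$, the $G$-conjugacy class sizes being read off once $N$ has been identified as an intersection of kernels of irreducible characters; and $\lambda_i$ is an entry of the diagonal matrix $\Lambda_{\textsf{X}}=\textsf{X}\textsf{D}\overline{\textsf{X}}^t$ provided by Theorem~\ref{square}, all of whose ingredients are determined by the table.

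For the second assertion, I would reduce to It\^o--Michler's theorem applied to the group $N$. Every $\theta\in\irr{N}$ is a constituent of exactly one minimal $G$-invariant character $\widehat{\theta}=\sum_{i=1}^{t}\theta^{g_i}$ by Lemma~\ref{conjugate-characters}, and since conjugate characters have equal degree one has $\widehat{\theta}(1)=t\,\theta(1)$; in particular $\theta(1)$ divides $\widehat{\theta}(1)$. Consequently, if $p$ divides no degree $\widehat{\theta}(1)$ of a minimal $G$-invariant character of $N$, then $p$ divides $\theta(1)$ for no $\theta\in\irr{N}$. It\^o--Michler's theorem (\cite[Theorem 5.4]{I-M}) then gives that $N$ has an abelian normal Sylow $p$-subgroup.

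I expect the only genuine difficulty to lie in the first part: one must recognize that the square $\theta_i(1)^2$ appearing in $(B_i)$ is harmless for extracting prime divisors, and, more importantly, that $\abs{N}$ and the matrix $\Lambda_{\textsf{X}}$ really are recoverable from the character table of $G$ rather than presupposing knowledge of $N$. The remaining steps are routine.
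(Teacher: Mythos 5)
Your proposal is correct and follows essentially the same route as the paper: the paper also deduces the first assertion from equation $(B_i)$ together with the identity $\widehat{\theta}_i(1)=t_i\theta_i(1)$ (the square being irrelevant for prime divisors), and obtains the second assertion by noting that $\theta(1)$ divides $\widehat{\theta}(1)$ and invoking It\^o--Michler's theorem for $N$. Your write-up merely makes explicit the computability of $\abs{N}$ and $\Lambda_{\textsf{X}}$ from the character table, which the paper leaves implicit.
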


The relations in Corollary~\ref{relations} yield also significant information of $N$ from its $G$-character tables in certain situations, as we show below.

\begin{remark}\label{remarks}
\begin{enumerate}
\item If $N$ is a Hall subgroup of $G$, then $t_i$ and $\theta_i(1)$  are coprime numbers, for each $i=1,\dots,k$, since $t_i$ divides $|G:N|$ and $\theta_i(1)$ divides $|N|$. Consequently, the integers $t_i$ and $\theta_i(1)$ can be computed from  the equations $(B_i)$, and then also the integers $e_i$ are known from $(A_i)$.

\item If $(A_i)$ is square-free, for some $i\in \{1,\dots,k\}$, then it follows that $e_{i}=1$ and $t_i=\frac{\lambda_i}{\abs{N}}$, so $\theta_i(1)=\frac{\abs{N}\chi_i(1)^2}{\lambda_i}$. In particular, the integers $t_i$, $e_{i}$ and $\theta_i(1)$ can be computed from a $G$-character table of $N$. Analogously, if $(B_i)$ is square-free,  for some $i\in \{1,\dots,k\}$, then the integers $t_i$, $e_{i}$ and $\theta_i(1)$ can be also computed.
\end{enumerate}
\end{remark}

In brief, with Notation~\ref{notation_tables}, the numbers $\lambda_i$, $1\le i\le k$, as well as $|N|$, are known from the character table of the group $G$, and so are also the relations between the important parameters $e_i, t_i, \theta_i(1)$, $1\le i\le k$, given in Corollary~\ref{relations}, as mentioned above. As a consequence, in particular situations when one of these parameters, for some $i\in \{1,\dots,k\}$, either $e_i$, $t_i$ or $\theta_i(1)$, is known, then all three of them  are known. But one can not expect that either the $G$-character tables of $N$ or even the character table of $G$ provides all these values with no additional information. This would mean, for instance, a positive answer to Problem 10 of Brauer's famous list in \cite{B}, which asked whether or not $N$ is abelian can be decided from the character table of the group. It is  known that this is not the case (c.f. \cite{S,DI}). In particular, the authors in \cite{DI} consider the concept of {\it automorphism of the character table} of a group, which is a permutation of rows and columns of the character table, considered as a matrix, such that the resulting matrix coincides with the initial one. Two normal subgroups of the group are then called {\it character-table-isomorphic} if there is an automorphism of the character table such that the image of one of the subgroups (as collection of conjugacy classes) is the other one. In that reference examples of groups are given, containing corresponding  two character-table-isomorphic normal subgroups, such that only one of the subgroups is abelian. Obviously character-table-isomorphic normal subgroups of a group $G$ have equal $G$-character tables, with equal corresponding diagonal matrices $\textsf{D}$ of $G$-conjugacy class sizes. (See Example~\ref{example2}.)

\section{ $G$-invariant table and minimal $G$-invariant characters of a normal subgroup}\label{sourceG-tables}

According with Lemma~\ref{conjugate-characters} and Notation~\ref{notation_tables}, we denote $$\Min{N}=\{\widehat{\theta}\mid \theta \in \irr{N}\}=\{\widehat{\theta}_i\mid i=1,\dots,k\}$$ the set of minimal $G$-invariant characters of $N$, which has cardinality $k$. It is clear from the definition that a $G$-character table of the normal subgroup $N$ is not unique, unless $G=N$. The description of each $G$-character table  depends upon the choice of a representative system of the equivalence classes in the equivalence relation respect to $N$ on $\irr{G}$. But it is also obvious that two $G$-character tables have proportional corresponding rows, by rational numbers given by the ramification numbers. One can then think of a {\it  $G$-invariant table of $N$} as defined next.

\begin{definition}\label{sourcetable} With Notation~\ref{notation_tables}, the \textbf{ $G$-invariant table of $N$} is defined to be the matrix $\widehat{\textsf{X}}= (y_{ij}) \in \textup{M}_k(\mathbb{C}),$
with entries $$y_{ij}=\widehat{\theta}_i(n_j),\quad 1\leq i,j \leq k. $$
\end{definition}

Observe that this $G$-invariant table of $N$ is unique and, comparing with the  $G$-character table $\textsf{X}$ of $N$, each row of $\textsf{X}$ is a multiple of the corresponding one of $\widehat{\textsf{X}}$. In particular, $\widehat{\textsf{X}}$ is non-singular. We notice that the knowledge of the  $G$-invariant table of $N$ from some $G$-character table of $N$ is equivalent to the knowledge of the ramifications numbers.

When $G=N$, the corresponding $G$-invariant table  is the character table of $G$, and so the  $G$-invariant table might be seen as a version of the character table of the group for normal subgroups, where the minimal $G$-invariant characters of the normal subgroup play the role of the irreducible characters of $G$. Indeed, for a normal subgroup $N$ of a group $G$, we can consider
\begin{align*}\text{cf}_G(N)&=\{f\mid f\text{ class function of }N,\ f^g=f \text{ for all } g\in G\}\\&=\{ f:N\longrightarrow \mathbb{C}\mid f(n^g)=f(n) \text{ for all }n\in N \text{ and all }g\in G\},\end{align*}
the set of $G$-invariant class functions of $N$, which are exactly  the class functions of $N$ constant on $G$-conjugacy classes.
It is straightforward to check that $\text{cf}_G(N)$ is a subspace  of the $\mathbb{C}$-vector space  of class functions of $N$, with basis the set of irreducible characters of $N$ (c.f. \cite[Theorem 2.8]{ISA}), and $\text{cf}_G(N)$ contains  the set $\Min{N}$ of minimal $G$-invariant characters of $N$, which are proven to form a basis as stated below.

\begin{proposition}\label{G-classfunctions} Let $N$ be a normal subgroup of a group $G$. If $f\in \textup{cf}_G(N)$ then $f$ can be uniquely expressed in the form $$f=\sum_{\widehat{\theta}\in \Min{N}}a_{\widehat{\theta}}\,\widehat{\theta},$$
where $a_{\widehat{\theta}}\in \mathbb{C}$. Furthermore, $f$ is a $G$-invariant character of $N$ if and only if all of the $a_{\widehat{\theta}}$ are nonnegative integers and $f\neq 0$.
\end{proposition}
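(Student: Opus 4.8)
The plan is to establish Proposition~\ref{G-classfunctions} by first showing that $\Min{N}$ spans $\textup{cf}_G(N)$ via a dimension count, then deducing linear independence from the square non-singular structure already available, and finally characterizing which expansions yield genuine characters. First I would observe that $\textup{cf}_G(N)$ consists precisely of those class functions of $N$ that are constant on $G$-conjugacy classes; since there are exactly $k$ such classes $\{n_1^G,\dots,n_k^G\}$, the indicator functions of these classes form a basis, so $\dim_{\mathbb{C}}\textup{cf}_G(N)=k$. On the other hand, by Lemma~\ref{conjugate-characters}(2) the set $\Min{N}$ has exactly $k$ elements, matching this dimension. Thus it suffices to prove that $\Min{N}=\{\widehat{\theta}_1,\dots,\widehat{\theta}_k\}$ is linearly independent, which will automatically give that it is a basis and hence the claimed unique expansion.

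For the linear independence, I would exploit the structure already built in Notation~\ref{notation_tables} and the $G$-invariant table $\widehat{\textsf{X}}$ of Definition~\ref{sourcetable}. Since each row of the $G$-character table $\textsf{X}$ is a positive rational multiple of the corresponding row of $\widehat{\textsf{X}}$, and $\textsf{X}$ is non-singular by Theorem~\ref{square}, the matrix $\widehat{\textsf{X}}$ is non-singular as well. But the rows of $\widehat{\textsf{X}}$ are exactly the value-vectors of $\widehat{\theta}_1,\dots,\widehat{\theta}_k$ on the $G$-conjugacy class representatives $n_1,\dots,n_k$; non-singularity of $\widehat{\textsf{X}}$ therefore means precisely that these $k$ functions are linearly independent as elements of $\textup{cf}_G(N)$. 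This immediately yields that every $f\in\textup{cf}_G(N)$ has a \emph{unique} representation $f=\sum_{\widehat{\theta}\in\Min{N}}a_{\widehat{\theta}}\,\widehat{\theta}$ with $a_{\widehat{\theta}}\in\mathbb{C}$.

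For the final characterization I would argue in both directions. If all $a_{\widehat{\theta}}$ are nonnegative integers and not all zero, then $f$ is a nonnegative integer combination of characters of $N$, hence itself a character, and it is $G$-invariant because each $\widehat{\theta}$ is; this direction is immediate. For the converse, suppose $f$ is a $G$-invariant character of $N$. By Lemma~\ref{conjugate-characters}(3) every $G$-invariant character of $N$ is a sum of minimal $G$-invariant characters, so $f=\sum_{\widehat{\theta}\in\Min{N}}b_{\widehat{\theta}}\,\widehat{\theta}$ for some nonnegative integers $b_{\widehat{\theta}}$ (with $f\neq 0$ forcing the combination to be nontrivial). By the uniqueness of the expansion just established, the coefficients $a_{\widehat{\theta}}$ must coincide with these $b_{\widehat{\theta}}$, hence are nonnegative integers, completing the proof.

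The step I expect to be the main obstacle is ensuring that Lemma~\ref{conjugate-characters}(3) genuinely delivers the decomposition with the \emph{correct} minimal $G$-invariant characters, namely the elements of $\Min{N}$, rather than merely \emph{some} minimal $G$-invariant characters; but since Lemma~\ref{conjugate-characters}(2) establishes that $\Min{N}$ is the complete list of all minimal $G$-invariant characters of $N$ (one per orbit), any such decomposition must use exactly these, and the potential ambiguity dissolves. The remaining care is purely to match the uniqueness from the spanning/independence argument with the integrality coming from the character decomposition.
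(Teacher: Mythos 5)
Your proposal is correct and follows essentially the same route as the paper: a dimension count showing $\dim_{\mathbb{C}}\textup{cf}_G(N)=k$, linear independence of $\Min{N}$ deduced from the non-singularity established in Theorem~\ref{square} (via the $G$-invariant table), and Lemma~\ref{conjugate-characters}(3) for the character criterion. The only difference is that you spell out the intermediate steps (indicator functions as a basis, passing from $\textsf{X}$ to $\widehat{\textsf{X}}$ by the ramification multiples) that the paper leaves implicit.
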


\begin{proof} On the one hand it is easily derived that $\text{cf}_G(N)$ forms a $\mathbb C$-vector space whose dimension is the number $k$ of $G$-conjugacy classes in $N$. Moreover, by Theorem~\ref{square} it follows that the set $\Min{N}$ forms an independent system of cardinality $k$ contained in $\text{cf}_G(N)$, which implies that it forms a basis of $\text{cf}_G(N)$, and proves the first part of the result. The rest follows from Lemma~\ref{conjugate-characters}(3).
\end{proof}

We close this section with the next generalisation of Brauer's theorem \ref{brauer_theorem} and further applications. More precisely, the  $G$-invariant table of a normal subgroup may play the role of the character table of the group $G$ in the proof of Brauer's theorem \ref{brauer_theorem}, as given in \cite[Theorem (6.32)]{ISA}. With this idea and analogous arguments the next generalization can be stated. We include the proof for the sake of completeness.

\begin{theorem}\label{Brauer}
Let $N$ be a normal subgroup of a group $G$. Let
$A$ be a group which  acts on the set $\Min{N}$ of minimal $G$-invariant characters of $N$ and on the set of $G$-conjugacy classes of $N$.  Assume that $\widehat{\theta}\,^a(n^a)=\widehat{\theta}(n)$ for all $\widehat{\theta}\in\Min{N}$, $a\in A$, and $n\in N$; where $n^a$ is an element of $(n^G)^a$. Then for each $a\in A$, the number of fixed characters of $\Min{N}$ is equal to the number of fixed $G$-conjugacy classes.
\end{theorem}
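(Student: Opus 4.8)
The plan is to mimic the classical proof of Brauer's permutation lemma (Theorem~\ref{brauer_theorem}) as given in \cite[Theorem~(6.32)]{ISA}, replacing the full character table of the group by the $G$-invariant table $\widehat{\textsf{X}}$ of $N$. The key observation is that the hypothesis $\widehat{\theta}\,^a(n^a)=\widehat{\theta}(n)$ says precisely that the action of each $a\in A$ permutes the rows of $\widehat{\textsf{X}}$ (indexed by $\Min{N}$) and the columns (indexed by the $G$-conjugacy classes $n^G$) in a compatible way that leaves the matrix invariant. The number of fixed characters of $\Min{N}$ under $a$ is then the trace of the permutation matrix $P_a$ recording the action of $a$ on rows, while the number of fixed $G$-conjugacy classes is the trace of the permutation matrix $Q_a$ recording the action on columns. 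So the goal reduces to showing $\operatorname{tr}(P_a)=\operatorname{tr}(Q_a)$ for every $a\in A$.

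First I would fix $a\in A$ and write down explicitly the two permutation matrices: $P_a$ acts on the index set $\{1,\dots,k\}$ of minimal $G$-invariant characters via $\widehat{\theta}_i\mapsto \widehat{\theta}_i^{\,a}$, and $Q_a$ acts on the index set of $G$-conjugacy classes via $n_j^G\mapsto (n_j^G)^a$. The compatibility hypothesis $\widehat{\theta}_i^{\,a}(n_j^a)=\widehat{\theta}_i(n_j)$ translates into the matrix identity $P_a\,\widehat{\textsf{X}}\,Q_a^{-1}=\widehat{\textsf{X}}$, or equivalently $P_a\,\widehat{\textsf{X}}=\widehat{\textsf{X}}\,Q_a$. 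Since $\widehat{\textsf{X}}$ is non-singular (this was observed right after Definition~\ref{sourcetable}, as each row of the non-singular matrix $\textsf{X}$ is a nonzero multiple of the corresponding row of $\widehat{\textsf{X}}$), I can rearrange this to $P_a=\widehat{\textsf{X}}\,Q_a\,\widehat{\textsf{X}}^{-1}$. Thus $P_a$ and $Q_a$ are conjugate matrices, whence $\operatorname{tr}(P_a)=\operatorname{tr}(Q_a)$, which is exactly the desired equality of the number of fixed rows and fixed columns.

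The one point requiring care, and the main potential obstacle, is verifying that $A$ genuinely acts by \emph{permutations} on $\Min{N}$ and on the set of $G$-conjugacy classes in a manner producing honest permutation matrices, so that counting fixed points coincides with taking traces. For the permutation matrices the fixed-point count is immediate: a diagonal entry of $P_a$ equals $1$ exactly when $\widehat{\theta}_i^{\,a}=\widehat{\theta}_i$ and $0$ otherwise, so $\operatorname{tr}(P_a)$ counts the fixed characters, and similarly $\operatorname{tr}(Q_a)$ counts the fixed $G$-conjugacy classes. Here I must confirm that $a\mapsto P_a$ and $a\mapsto Q_a$ are well-defined (the images $\widehat{\theta}_i^{\,a}$ again lie in $\Min{N}$, and $(n_j^G)^a$ is again a $G$-conjugacy class of $N$), which is built into the hypothesis that $A$ acts on these two sets; and that the labelling of rows and columns by the same index set $\{1,\dots,k\}$ is consistent with the chosen ordering in $\widehat{\textsf{X}}$. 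Once the matrix identity $P_a\,\widehat{\textsf{X}}=\widehat{\textsf{X}}\,Q_a$ is established from the hypothesis and the non-singularity of $\widehat{\textsf{X}}$ is invoked, the conjugacy of $P_a$ and $Q_a$ and the resulting trace equality finish the argument for each $a\in A$ separately.
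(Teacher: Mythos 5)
Your proposal is correct and follows essentially the same route as the paper: both form the permutation matrices $P_a$ and $Q_a$ for the actions on $\Min{N}$ and on the $G$-conjugacy classes, translate the compatibility hypothesis into the identity $P_a\widehat{\textsf{X}}=\widehat{\textsf{X}}Q_a$, and use the non-singularity of the $G$-invariant table to conclude that $P_a$ and $Q_a$ are conjugate and hence have equal traces, which count the fixed points. No gaps to report.
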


\begin{proof} With Notation~\ref{notation_tables}, we consider the  $G$-invariant table $\widehat{\textsf{X}}$ as defined above. For each $a\in A$, let $P_a=(p_{ij})$ where $p_{ij}=0$ unless $\widehat{\theta}_i\,^a=\widehat{\theta}_j$, in which case $p_{ij}=1$. Similarly, we define $Q_a=(q_{ij})$ where $q_{ij}=1$ if $(n_i^G)^a=n_j^G$, and zero otherwise. We write $n_i^a=n_j$ if $(n_i^G)^a=n_j^G$. It is not difficult to see that the $(u,v)$-entry of the matrix $P_a \widehat{\textsf{X}}$ is $$\displaystyle\sum_{i=1} ^k p_{ui}\widehat{\theta}_i(n_v)=\widehat{\theta}_u\,^a(n_v).$$ Similarly, the $(u,v)$-entry of $\widehat{\textsf{X}}Q_a$ is $$\displaystyle\sum_{j=1}^k\widehat{\theta}_u(n_j)q_{jv}=\widehat{\theta}_u(n_v^{a^{-1}}).$$ Our hypotheses lead to $P_a\widehat{\textsf{X}}=\widehat{\textsf{X}}Q_a$, and since $\widehat{\textsf{X}}$ is non-singular, then $Q_a=\widehat{\textsf{X}}^{-1}P_a\widehat{\textsf{X}}$. We conclude that the traces of $Q_a$ and $P_a$ are equal, and these are precisely the number of fixed points of the action of $a$ on $\Min{N}$ and the set of $G$-conjugacy classes of $N$.
\end{proof}

In the same spirit, we are also able to obtain a version for $G$-conjugacy classes of the well-known result which states that the number of real classes of a group $G$ is equal to the number of real valued irreducible characters of $G$. We recall that an element $g\in G$ is said to be {\it real} if $g$ is conjugate in $G$ to its inverse $g^{-1}$; in this case, the corresponding conjugacy class $g^G$ is called {\it real}. Also a character $\vartheta$ of a group $X$ is {\it real valued} if $\vartheta(x)$ is a real number for all $x\in X$.

We point out that if $N$ is a normal subgroup of a group $G$, the number of real valued minimal $G$-invariant characters of $N$ coincides with the number of  irreducible characters of $G$, which are not equivalent respect to $N$, and whose restrictions to $N$ are real valued, as well as with the number of real valued rows in any $G$-character table of $N$, by Lemmas~\ref{conjugate-characters},~\ref{Irr(G)-characters} and Definition~\ref{G-character table}.

\begin{corollary}\label{CorBrauer}
Let $N$ be a normal subgroup of a group $G$. Then the number of real $G$-conjugacy classes of $N$ is equal to the  number of real valued minimal $G$-invariant characters of $N$.
\end{corollary}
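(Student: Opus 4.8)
The plan is to apply the generalized Brauer theorem (Theorem~\ref{Brauer}) to a cyclic group of order $2$ acting by the complex-conjugation/inversion involution, exactly as in the classical proof that the number of real classes equals the number of real-valued irreducible characters. First I would set $A=\langle a\rangle$ with $a$ of order $2$, and define the action of $a$ on the set $\Min{N}$ of minimal $G$-invariant characters by complex conjugation, i.e. $\widehat{\theta}\,^a=\overline{\widehat{\theta}}$, and the action of $a$ on the set of $G$-conjugacy classes of $N$ by inversion, i.e. $(n^G)^a=(n^{-1})^G$. I would need to check two things before invoking Theorem~\ref{Brauer}: that these are genuinely well-defined actions of $A$ on the respective sets, and that the compatibility hypothesis $\widehat{\theta}\,^a(n^a)=\widehat{\theta}(n)$ holds.

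For the well-definedness of the action on $\Min{N}$, I would observe that the complex conjugate $\overline{\widehat{\theta}}$ of a minimal $G$-invariant character is again a minimal $G$-invariant character: conjugation of characters commutes with the $G$-action by conjugation (since $\overline{\theta^g}=\overline{\theta}^{\,g}$), it sends $\irr{N}$ to $\irr{N}$, and it carries the orbit sum $\widehat{\theta}=\sum_{i=1}^t\theta^{g_i}$ to $\sum_{i=1}^t\overline{\theta}^{\,g_i}=\widehat{\overline{\theta}}$, which by Lemma~\ref{conjugate-characters} is precisely the minimal $G$-invariant character attached to the orbit of $\overline{\theta}\in\irr{N}$. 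For the action on $G$-conjugacy classes, inversion sends $n^G$ to $(n^{-1})^G$ and is clearly an involution on the set of $G$-conjugacy classes of $N$, since $N$ is normal and inversion commutes with conjugation. The compatibility hypothesis then reduces to the elementary identity $\overline{\widehat{\theta}}(n^{-1})=\widehat{\theta}(n)$ for all $n\in N$, which is immediate from the standard fact that $\overline{\vartheta(x)}=\vartheta(x^{-1})$ for any character $\vartheta$ of a finite group.

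With the two hypotheses verified, Theorem~\ref{Brauer} applied to the single involution $a$ yields that the number of elements of $\Min{N}$ fixed by $a$ equals the number of $G$-conjugacy classes of $N$ fixed by $a$. Finally I would identify these two fixed-point counts with the quantities in the statement: a minimal $G$-invariant character $\widehat{\theta}$ is fixed by $a$ precisely when $\overline{\widehat{\theta}}=\widehat{\theta}$, i.e. when $\widehat{\theta}$ is real valued; and a $G$-conjugacy class $n^G$ is fixed by $a$ precisely when $(n^{-1})^G=n^G$, i.e. when $n^G$ is a real $G$-conjugacy class in the sense defined just before the corollary. Equating the two counts gives the result.

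I do not expect a serious obstacle here, since the argument is a direct specialization of the preceding theorem; the only points requiring care are the bookkeeping verifications that conjugation of characters really does permute $\Min{N}$ (rather than merely $\irr{N}$) and that it matches the inversion action on classes through the compatibility identity. The substance has already been absorbed into Theorem~\ref{Brauer}, so the main work is simply exhibiting the correct $A$-action and checking its hypotheses.
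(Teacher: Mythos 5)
Your proposal is correct and follows essentially the same route as the paper: both apply Theorem~\ref{Brauer} to a cyclic group of order $2$ acting on $\Min{N}$ and on the $G$-conjugacy classes of $N$ via inversion/complex conjugation (the paper writes the character action as $\widehat{\theta}\,^{\sigma}(n)=\widehat{\theta}(n^{-1})$, which coincides with your $\overline{\widehat{\theta}}$ by the identity $\overline{\vartheta(x)}=\vartheta(x^{-1})$), and then identify the fixed points with the real-valued characters and real classes. Your extra verification that conjugation genuinely permutes $\Min{N}$ is a correct piece of bookkeeping that the paper leaves implicit.
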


\begin{proof}
Let $A=\langle \sigma \rangle$ be a cyclic group of order $2$ that acts on the set $\Min{N}$ of all minimal $G$-invariant characters of $N$, and on the set of $G$-conjugacy classes of $N$, such that  $\widehat{\theta}\,^{\sigma}(n)=\widehat{\theta}(n^{-1})$, and $(n^G)^{\sigma}=(n^{-1})^G$, for all $n\in N$. Note that $\sigma$ fixes a $G$-conjugacy class $n^G$ if and only if $n^G$ is real. Also the characters $\widehat{\theta}\in \Min{N}$ which are fixed by $\sigma$ are exactly the  real valued characters in $\Min{N}$. It follows from Theorem~\ref{Brauer} that the number of real $G$-conjugacy classes of $N$ is equal to the number of real valued characters of $\Min{N}$.
\end{proof}

The question about the relation between the number of real conjugacy classes and the structure of the group has been previously considered. Groups with all elements real are called {\it ambivalent} and have been studied  by Berggren and others (e.g., \cite{Ber}). In the opposite situation, Burnside noticed that a group $G$ is of odd order if and only if $G$ has a unique real conjugacy class (which is of course the one formed by the identity of the group). Iwasaki \cite{Iwa} characterised those groups with exactly two real conjugacy classes, in terms of the normality of a Sylow $2$-subgroup.
We wonder about the influence of real $G$-conjugacy classes on a normal subgroup $N$ of a group $G$. If $x\in N$ and $x$ is real in $N$ then $x$ is obviously real in $G$, but the converse is easily checked not to be true in general, as the next example shows.

\begin{example}
\label{example_real}
Let $G=\langle a\rangle  \rtimes \langle b \rangle$ be the holomorph of $C_5\cong N=\langle a\rangle$ the cyclic group of order $5$, with  $a^b=a^2$. It is clear the $N$ has no real conjugacy class different from the trivial one. However there are two $G$-conjugacy classes, namely $1^G$ and $a^G$, and they are real. Indeed,  one can check that a $G$-character table of $N$ is given  by the matrix $\begin{bmatrix}1&1\\4&-1\end{bmatrix}$, while the character table of $N$ contains non-real values.
\end{example}

The next results show that real $G$-conjugacy classes of normal subgroups can provide  structural information about them.

\begin{lemma}
\label{odd_order}
Let $N$ be a normal subgroup of a group $G$. If there is a unique real $G$-conjugacy class in $N$, then $N$ is of odd order, but the converse is not true in general.
\end{lemma}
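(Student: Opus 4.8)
The plan is to prove the forward implication by contraposition, using nothing more than Cauchy's theorem, and to settle the failure of the converse by pointing to an example already recorded in the paper.

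First I would observe that the trivial $G$-conjugacy class $1^G$ is always real, since $1^{-1}=1$; hence if $N$ has a unique real $G$-conjugacy class, that class must be $1^G$, and no non-identity element of $N$ can be real in $G$. The key remark is that any element $t\in N$ of order $2$ is automatically real, because $t=t^{-1}$ forces $(t^{-1})^G=t^G$. Thus the hypothesis rules out every involution in $N$. Consequently, if $|N|$ were even, Cauchy's theorem would supply an element $t\in N$ of order $2$, and then $t^G$ would be a real $G$-conjugacy class distinct from $1^G$, contradicting uniqueness. This forces $|N|$ to be odd.

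For the converse I would simply exhibit a normal subgroup of odd order carrying more than one real $G$-conjugacy class; this is precisely the situation already computed in Example~\ref{example_real}, where $N\cong C_5$ is normal of odd order in the holomorph $G$ of $C_5$, and yet both $1^G$ and $a^G$ are real $G$-conjugacy classes. Displaying this single example suffices to show that oddness of $|N|$ does not in general entail uniqueness of the real $G$-conjugacy class.

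I do not anticipate a genuine obstacle here: the forward direction is immediate once one notes that involutions are trivially real and applies Cauchy's theorem, and the counterexample is already available. If one preferred a character-theoretic route, one could instead invoke Corollary~\ref{CorBrauer} to translate the hypothesis into the assertion that $1_N$ is the only real valued minimal $G$-invariant character of $N$; but the elementary argument sketched above is shorter and entirely self-contained, so that is the one I would present.
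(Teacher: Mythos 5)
Your argument is correct and coincides with the paper's own proof: both deduce the forward direction from the fact that an even-order $N$ would contain an involution, which is automatically a real element distinct from the identity, and both refute the converse by citing Example~\ref{example_real}. Nothing further is needed.
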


\begin{proof} If $N$ had even order, then $N$ would contain an involution, which would be a real element different from the identity.
Example~\ref{example_real} shows that the converse is not true in general.
\end{proof}

\begin{corollary}
\label{teo_iwa}
Let $N$ be a normal subgroup of a group $G$. Assume that there are exactly two real $G$-conjugacy classes in $N$. Then $N$ has a normal Sylow $2$-subgroup.
\end{corollary}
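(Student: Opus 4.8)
The plan is to translate the hypothesis on real $G$-conjugacy classes into the structural statement that the involutions of $N$ generate a normal elementary abelian $2$-subgroup, and then to establish $2$-closedness.

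First I would dispose of the odd case: if $|N|$ is odd then the trivial subgroup is a normal Sylow $2$-subgroup, so assume $|N|$ is even. Every involution $t\in N$ satisfies $t=t^{-1}$, hence $t^G=(t^{-1})^G$ is real; since $N$ has exactly two real $G$-conjugacy classes, namely $1^G=\{1\}$ and one further class $x^G$, every involution of $N$ lies in $x^G$. Consequently $x^G$ consists precisely of the involutions of $N$, they are all $G$-conjugate, and the only real elements of $N$ (those $G$-conjugate to their inverse) are $1$ and the involutions.

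The key observation I would use is that any product of two involutions $s=t_1t_2$ is \emph{strongly real}: since $s^{-1}=t_2t_1=t_1^{-1}st_1$, the element $s$ is $G$-conjugate to $s^{-1}$, hence real. By the previous paragraph $s$ is therefore $1$ or an involution, so $(t_1t_2)^2=1$, which is equivalent to $t_1t_2=t_2t_1$. Thus any two involutions of $N$ commute, and the involutions together with $1$ form an elementary abelian subgroup $H$. As the involutions constitute the single $G$-class $x^G$, the subgroup $H=\langle x^G\rangle$ is normal in $G$ (hence in $N$), its nonidentity elements are exactly the involutions of $N$, and $G$ acts transitively by conjugation on $H\setminus\{1\}$.

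It then remains to prove that $H$ is a Sylow $2$-subgroup of $N$, i.e.\ that $|N/H|$ is odd; this is where the real difficulty lies. Since $H\trianglelefteq N$ is a normal $2$-subgroup it sits inside every Sylow $2$-subgroup, and all involutions of such a subgroup already lie in $H$. Suppose for a contradiction that $|N/H|$ is even. Then there is a $2$-element $s\in N\setminus H$ with $s^2\in H$; as $s$ is not an involution, $u:=s^2\in H\setminus\{1\}$, so $s$ has order $4$ and $s^{-1}=su$. Conjugation by $s$ induces a linear involution $\sigma$ on $H\cong\mathbb{F}_2^{n}$ (because $s^2=u\in H$ acts trivially on the abelian group $H$), and the fact that no coset representative $sh$ is an involution is equivalent to $u\notin\operatorname{Im}(\sigma+1)$; in particular no element of $H$ inverts $s$. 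The main obstacle is to produce an element of $G$ inverting $s$, which would contradict the non-reality of the order-$4$ element $s$. I would exploit the transitivity of $G$ on $H\setminus\{1\}$ to locate a suitable odd-order element carrying $s$ to $s^{-1}$; alternatively this last fusion-theoretic step can be settled by reducing to Iwasaki's theorem \cite{Iwa}, together with the Brauer--Suzuki theorem to handle the degenerate case $|H|=2$ where the Sylow $2$-subgroup could a priori be cyclic or generalized quaternion. This fusion argument, rather than the algebraic reduction above, is the crux of the proof.
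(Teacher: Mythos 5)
Your first half coincides with the paper's: the product of two involutions $a,b$ satisfies $(ab)^a=(ab)^{-1}$, hence is real, hence by the two-class hypothesis is trivial or again an involution, so $M=1\cup x^G$ is an elementary abelian normal subgroup of $G$ containing every involution of $N$. The problem is the second half. You correctly identify that the entire content of the corollary is to show $|N/M|$ is odd, but you do not prove it: you only name two candidate strategies, and neither works as stated. Producing an element of $G$ inverting an order-$4$ element $s$ with $s^2\in M$ \emph{is} the missing step, and ``exploiting transitivity of $G$ on $M\setminus\{1\}$'' does not manufacture such an element. The proposed reduction to Iwasaki's theorem also fails: Iwasaki's hypothesis concerns real conjugacy classes of $N$ itself, whereas yours concerns real $G$-classes, and exactly two real $G$-classes does not force exactly two real $N$-classes (several $N$-classes of involutions may fuse into the single $G$-class $x^G$, and each of them is real in $N$). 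The paper explicitly remarks after the corollary that its hypothesis need not imply Iwasaki's. So the crux of the argument is absent.

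The paper closes this gap by an entirely different device: induction on $|G|$ through the quotient $\overline{G}=G/M$. It checks that equivalence classes of $\irr{G}$ respect to $N$ consisting of characters with $M$ in their kernel correspond to equivalence classes of $\irr{\overline{G}}$ respect to $\overline{N}$, and that real-valuedness of restrictions passes down, so the number of real-valued rows of a $\overline{G}$-character table of $\overline{N}$ is at most that for $N$, namely two. Corollary~\ref{CorBrauer} (the count ``real $G$-classes $=$ real-valued minimal $G$-invariant characters,'' which is the actual payoff of the $G$-invariant table machinery) then bounds the number of real $\overline{G}$-classes of $\overline{N}$ by two; since $\overline{N}$ of even order contributes at least two, induction gives a normal Sylow $2$-subgroup of $\overline{N}$, whose preimage is one for $N$ because $M$ is a $2$-group. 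If you wish to keep your direct fusion route you must actually exhibit the inverting element; as written, the proposal is not a complete proof.
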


\begin{proof}
We argue by induction on $\abs{G}$. We may assume that $N$ is of even order and consider an involution $x\in N$. Then $x^G$ is a non-trivial real $G$-conjugacy class and, by hypothesis,   $x^G$ is the set of  all involutions of $N$.
Set $M:=1\cup x^G\subseteq N$. We claim that $M$ is a (elementary abelian normal) subgroup of $G$, and it is enough to prove that $ab\in M$, if  $a,b\in M$ are involutions. Since  $(ab)^a=ba=b^{-1}a^{-1}=(ab)^{-1}$, it follows that $ab$ is real, which implies by hypothesis that  $ab\in M$.

Let us denote by ``bar'' the images in the factor group $\overline{G}:=G/M$. If $|\overline{N}|$ is odd, then $M\in\operatorname{Syl}_2(N)$ and we are done. Hence we may assume that $\overline{N}$ has a non-trivial $\overline{G}$-conjugacy class that is real. We claim that the number of real valued rows in a $\overline{G}$-character table of $\overline{N}$ is not larger than that in a $G$-character table of $N$. If $\overline{\chi}$ is an irreducible character of $\overline{G}$, then $\overline{\chi}(gM)=\chi(g)$ for all $g\in G$, where $\chi$ is an irreducible character of $G$ containing $M$ in its kernel. It is trivial that $\overline{\chi}_{N/M}$ is real valued if and only if $\chi_N$ is real valued. Moreover, if $\overline{\chi}_1, \overline{\chi}_2\in \irr{\overline{G}}$, with corresponding characters $\chi_1,\chi_2\in \irr{G}$ containing $M$ in their kernels, then $\overline{\chi}_1$ and $\overline{\chi}_2$ are equivalent respect to $N/M$ if and only if $\chi_1$ and $\chi_2$ are equivalent respect to $N$. The claim is now clear. (Indeed, if $\chi\in \irr{G}$ contains $M$ in its kernel, and $\phi\in \irr{G}$ is equivalent to $\chi$ respect to $N$, then $M$ is contained in the kernel of $\phi$.) Corollary~\ref{CorBrauer} implies that $\overline{N}$ has exactly two real $\overline{G}$-conjugacy classes, and the result follows by induction.
\end{proof}

It is worth to highlight that a normal subgroup $N$ of a group $G$ may satisfy the hypotheses of Corollary~\ref{teo_iwa} and, however, it might not satisfy those of Iwasaki's theorem, i.e. it might not have exactly two real conjugacy classes, as  Example~\ref{example_real} easily shows.

\section{Some illustrative examples}\label{Section examples}

The next examples show the application of our results to particular groups; especially the relations in Corollary~\ref{relations}, which may be very useful to obtain information about normal subgroups. The first example shows that it is possible to completely describe the character table of the considered normal subgroup, although in general it is not determined by the character table of  the group, as well as to derive the  existence of an abelian normal Sylow subgroup in the normal subgroup. In this example, the actions by conjugation of the group,  on the set of irreducible characters and on the set of conjugacy classes of the normal subgroup,  happen to be the trivial actions. In the second example we apply our computations to one of the groups, and its corresponding normal subgroups, previously mentioned, following Remark~\ref{remarks}, which appears in \cite[Section 3.2]{DI}. As expected, it shows up that the relations in  Corollary~\ref{relations} do not completely determine the parameters $e_i,t_i$ and $\theta_i(1)$, $1\le i\le k$.
Observe that we show techniques to obtain information from  the character table of the group, though certainly the knowledge of the group, as well as other techniques, might provide other possibilities.

Unless for the labels for irreducible characters of $G$, we follow the notation introduced previously in the paper;  in particular, Notation \ref{notation_tables}.

\begin{example}\label{example1}
Let $A$ be a dihedral group of order 8, and let $B$ be an alternating group on $4$ letters. Consider $G=A\times B$. The character table of $G$ is the $20\times 20$ matrix appearing in Table~\ref{tabla11}, where $\zeta$ denotes a primitive $3$-root of unity, and the second row corresponds to the sizes of the conjugacy classes. Let $N$ be the intersection of the kernels of $\chi_1, \chi_2,\chi_3$ and $\chi_4$, so $N$ is a normal subgroup of $G$ which is the union of the conjugacy classes $1A, 3A, 2C, 2D, 3B, 6C, 2G, 6F$. It follows that $\abs{N}=24$.

\setlength{\arraycolsep}{1.5pt}

\begin{table}
\begin{footnotesize}
$$\begin{array}{ccccccccccccccccccccc}
& 1A & 2A & 2B & 3A & 2C & 2D & 4A &  6A & 2E &  6B & 2F & 3B &  6C & 2G & 12A & 4B  & 6D  & 6E &  6F & 12B \\

& 1& 2& 2& 4& 1& 3& 2& 8& 6& 8& 6& 4& 4& 3& 8& 6& 8& 8& 4& 8 \\

&&&&&&&&&&&&&&&&&&&&\\

\chi_1 & 1 & 1 & 1 & 1 & 1 & 1 & 1 & 1 & 1 & 1 & 1 & 1 & 1 & 1 & 1 & 1 & 1 & 1 & 1 & 1 \\

\chi_2 &1 & -1 & -1 & 1 & 1 & 1 & 1 & -1 & -1 & -1 & -1 & 1 & 1 & 1 & 1 & 1 & -1 & -1 & 1 & 1 \\

\chi_3 &1 & -1 & 1 & 1 & 1 & 1 & -1 & -1 & -1 & 1 & 1 & 1 & 1 & 1 & -1 & -1 & -1 & 1 & 1 & -1 \\

\chi_4 &1 & 1 & -1 & 1 & 1 & 1 & -1 & 1 & 1 & -1 & -1 & 1 & 1 & 1 & -1 & -1 & 1 & -1 & 1 & -1 \\

\chi_5 &1 & -1 & -1 & \zeta^2 & 1 & 1 & 1 & -\zeta^2 & -1 & -\zeta^2 & -1 & \zeta & \zeta^2 & 1 & \zeta^2 & 1 & -\zeta & -\zeta & \zeta & \zeta \\

\chi_6 &1 & -1 & -1 & \zeta & 1 & 1 & 1 & -\zeta & -1 & -\zeta & -1 &\zeta^2& \zeta & 1 & \zeta & 1 & -\zeta^2& -\zeta^2&\zeta^2&\zeta^2\\

\chi_7 &1 & -1 & 1 &\zeta^2& 1 & 1 & -1 & -\zeta^2& -1 &\zeta^2& 1 & \zeta &\zeta^2& 1 & -\zeta^2& -1 & -\zeta & \zeta & \zeta & -\zeta \\

\chi_8 &1 & -1 & 1 & \zeta & 1 & 1 & -1 & -\zeta & -1 & \zeta & 1 &\zeta^2& \zeta & 1 & -\zeta & -1 & -\zeta^2&\zeta^2&\zeta^2& -\zeta^2\\

\chi_9 &1 & 1 & -1 &\zeta^2& 1 & 1 & -1 &\zeta^2& 1 & -\zeta^2& -1 & \zeta &\zeta^2& 1 & -\zeta^2& -1 & \zeta & -\zeta & \zeta & -\zeta \\

\chi_{10} &1 & 1 & -1 & \zeta & 1 & 1 & -1 & \zeta & 1 & -\zeta & -1 &\zeta^2& \zeta & 1 & -\zeta & -1 &\zeta^2& -\zeta^2&\zeta^2& -\zeta^2\\

\chi_{11} &1 & 1 & 1 &\zeta^2& 1 & 1 & 1 &\zeta^2& 1 &\zeta^2& 1 & \zeta &\zeta^2& 1 &\zeta^2& 1 & \zeta & \zeta & \zeta & \zeta \\

\chi_{12} &1 & 1 & 1 & \zeta & 1 & 1 & 1 & \zeta & 1 & \zeta & 1 &\zeta^2& \zeta & 1 & \zeta & 1 &\zeta^2&\zeta^2&\zeta^2&\zeta^2\\

\chi_{13} &2 & 0 & 0 & 2 & -2 & 2 & 0 & 0 & 0 & 0 & 0 & 2 & -2 & -2 & 0 & 0 & 0 & 0 & -2 & 0 \\

\chi_{14} &2 & 0 & 0 & 2\zeta^2 & -2 & 2 & 0 & 0 & 0 & 0 & 0 & 2\zeta & -2\zeta^2 & -2 & 0 & 0 & 0 & 0 & -2\zeta & 0 \\

\chi_{15} &2 & 0 & 0 & 2\zeta & -2 & 2 & 0 & 0 & 0 & 0 & 0 & 2\zeta^2 & -2\zeta & -2 & 0 & 0 & 0 & 0 & -2\zeta^2 & 0 \\

\chi_{16} &3 & -3 & -3 & 0 & 3 & -1 & 3 & 0 & 1 & 0 & 1 & 0 & 0 & -1 & 0 & -1 & 0 & 0 & 0 & 0 \\

\chi_{17} &3 & -3 & 3 & 0 & 3 & -1 & -3 & 0 & 1 & 0 & -1 & 0 & 0 & -1 & 0 & 1 & 0 & 0 & 0 & 0 \\

\chi_{18} &3 & 3 & -3 & 0 & 3 & -1 & -3 & 0 & -1 & 0 & 1 & 0 & 0 & -1 & 0 & 1 & 0 & 0 & 0 & 0 \\

\chi_{19} &3 & 3 & 3 & 0 & 3 & -1 & 3 & 0 & -1 & 0 & -1 & 0 & 0 & -1 & 0 & -1 & 0 & 0 & 0 & 0 \\

\chi_{20} &6 & 0 & 0 & 0 & -6 & -2 & 0 & 0 & 0 & 0 & 0 & 0 & 0 & 2 & 0 & 0 & 0 & 0 & 0 & 0
\end{array}$$
\end{footnotesize}
\caption{Character table of the group $G$ in Example~\ref{example1}.}
\label{tabla11}
\end{table}

Then there are  $8$ equivalence classes in the equivalence relation respect to $N$, defined on $\irr{G}$, and so also $8$ minimal $G$-invariant characters of $N$, denoted $\widehat{\theta_i}$, $i=1,\dots,8$, with $\Omega=\{\theta_i\mid i=1,\dots,8\}$ a representative system of the orbits in the action by conjugation of $G$ on $\irr{N}$. From the observation of proportional rows in the character table, when restricting to the described conjugacy classes, we obtain:
$$
\begin{array}{ll}
\irr{G|\theta_1}=\{\chi_1, \chi_2, \chi_3, \chi_4  \}, \quad & \irr{G|\theta_2}=\{\chi_5, \chi_7, \chi_9, \chi_{11}  \}, \\

\irr{G|\theta_3}=\{\chi_6, \chi_8, \chi_{10}, \chi_{12}  \}, \quad & \irr{G|\theta_4}=\{\chi_{13}  \}, \\

 \irr{G|\theta_5}=\{\chi_{14} \}, \quad & \irr{G|\theta_6}=\{\chi_{15}  \}, \\

 \irr{G|\theta_7}=\{\chi_{16}, \chi_{17}, \chi_{18}, \chi_{19},  \}, \quad & \irr{G|\theta_8}=\{\chi_{20}  \},
\end{array}$$

In particular, for each $i=1,\dots, 8$, we know the size of $ \irr{G|\theta_i}$. Also, whenever $\chi,\vartheta\in \irr{G|\theta_i}$, it holds that $$\chi_N=\vartheta_N=e_i\widehat{\theta_i}.$$

We may choose
$\{\chi_1, \chi_5, \chi_6,\chi_{13}, \chi_{14}, \chi_{15}, \chi_{16}, \chi_{20}\}$ a representative
system of the equivalence classes on $\irr{G}$, and construct the corresponding $G$-character table $\textsf{X}$ of $N$, which is the matrix of dimension $8\times 8$ in Table~\ref{tabla12}.

\begin{table}
\begin{footnotesize}
$$\begin{array}{ccccccccc}
& 1A& 3A& 2C& 2D& 3B& 6C& 2G& 6F \\
& 1&4&1&3&4&4&3&4 \\
&&&&&&&&\\

\chi_1 & 1 & 1 & 1 & 1 & 1 & 1 & 1 & 1 \\

\chi_5 & 1 & \zeta^2 & 1 & 1 & \zeta & \zeta^2 & 1 & \zeta \\

\chi_6 & 1 & \zeta & 1 & 1 & \zeta^2 & \zeta & 1 & \zeta^2 \\

\chi_{13} & 2 & 2 & -2 & 2 & 2 & -2 & -2 & -2 \\

\chi_{14} & 2 & 2\zeta^2 & -2 & 2 & 2\zeta & -2\zeta^2 & -2 & -2\zeta \\

\chi_{15} & 2 & 2\zeta & -2 & 2 & 2\zeta^2 & -2\zeta & -2 & -2\zeta^2 \\

\chi_{16} & 3 & 0 & 3 & -1 & 0 & 0 & -1 & 0 \\

\chi_{20} & 6 & 0 & -6 & -2 & 0 & 0 & 2 & 0
\end{array}$$
\end{footnotesize}
\caption{$G$-character table $\textsf{X}$ of $N$ in Example~\ref{example1}.}
\label{tabla12}
\end{table}

 Note also that $\textsf{D}=\text{diag}(1,4,1,3,4,4,3,4)$ is the associated diagonal matrix, whose diagonal entries are the sizes of the $G$-conjugacy classes that built $N$.

 According to Corollary~\ref{relations}, we obtain that the numbers appearing in the right side of relations $(A_i)$, $1\le i\le 8$, are $1,1,1,4,4,4,1,4$, respectively. A first conclusion is that for each $i\in \{1,2,3,7\}$, it holds that $$e_i=t_i=1,\text{ and so }\chi_N=\widehat{\theta_i}=\theta_i\in \irr{N},$$ whenever $\chi\in \irr{G|\theta_i}$. We compute now the numbers appearing in the right side of relations $(B_i)$, $1\le i\le 8$, and obtain $ 1,1,1,1,1,1,9,9$, respectively. As immediate consequence, we know that for every $i\in \{1,2,3,4,5,6\}$, it holds that $$t_i=1, \theta_i(1)=1, \text{ and so also } \widehat{\theta_i}=\theta_i.$$
 In particular, $\widehat{\theta_i}(1)=1$, for $i=1,\dots, 6$, and the only prime divisor of $\widehat{\theta_i}(1)$, for $i=7,8$ is $3$.

Besides, by Corollary~\ref{primesinvariants}, since $2$ does not divide $\widehat{\theta_i}(1)$ for all $i=1,\dots,8$, we can affirm that $N$ has an abelian normal Sylow $2$-subgroup. Note that, however, $2$ divides the degrees of $\chi_{13},\chi_{14}, \chi_{15}$ and $\chi_{20}$, all of them appearing in the $G$-character table considered.

 From equations $(A_i)$, $i=4,5,6$, and $(B_7)$, it follows that $e_i=2$, for $i=4,5,6$, and $\theta_7(1)=3$, respectively. Moreover, from $(A_8)$ and $(B_8)$, it follows that $t_8=1$, $e_8=2$ and $\theta_8(1)=3$. Also, $\widehat{\theta_8}=\theta_8$.

 Consequently, $\irr{N}=\Min{N}=\{\theta_i=\widehat{\theta_i}\mid i=1,\dots, 8\}$, with $\theta_i(1)=1$, $i=1,\dots 6$, $\theta_i(1)=3$, $i=7,8$, and  the character table of $N$ can be completely described in this particular example, having $n^N=n^G$ for all $n\in N$, and the irreducible characters:
\begin{align*}\theta_1&=(\chi_1)_N,&\theta_2&=(\chi_5)_N,&\theta_3&=(\chi_6)_N,&\theta_4&=\frac{1}{2}(\chi_{13})_N,\\ \theta_5&=\frac{1}{2}(\chi_{14})_N,
&\theta_6&=\frac{1}{2}(\chi_{15})_N,&\theta_7&=(\chi_{16})_N,&\theta_8&=\frac{1}{2}(\chi_{20})_N.\end{align*}

In particular, $N$ is non-abelian. It can be cheked that $N=\ze{A}\times B$.
\end{example}

\begin{example}\label{example2}
Let $G$ be the automorphism group of the dihedral group of order $16$. Its  character table is
the $11\times 11$ matrix appearing in Table~\ref{tabla21}. Let $N$ be the intersection of the kernels of $\chi_2$ and
$\chi_6$, so $N$ is a normal subgroup of $G$ which is the union of the
conjugacy classes $1A, 2B, 4A$ and $2D$, and its order is $8$.

\setlength{\arraycolsep}{0.7pt}

\begin{table}
\begin{footnotesize}
$$\begin{array}{cccccccccccc}
& 1A& 2A& 2B& 2C& 4A& 2D& 8A& 4B& 2E& 4C& 8B \\

&  1& 4& 4& 2& 2& 1& 4& 4& 4& 2& 4 \\

&&&&&&&&&&&\\

\chi_1&1& 1& 1& 1& 1& 1& 1& 1& 1& 1& 1 \\
\chi_2&1& -1& 1& 1& 1& 1& -1& -1& 1& 1& -1 \\
\chi_3&1& 1& -1& 1& 1& 1& -1& 1& -1& 1& -1 \\
\chi_4&1& -1& -1& 1& 1& 1& 1& -1& -1& 1& 1 \\
\chi_5&1& 1& 1& -1& 1& 1& 1& -1& -1& -1& -1 \\
\chi_6&1& -1& 1& -1& 1& 1& -1& 1& -1& -1& 1 \\
\chi_7&1& 1& -1& -1& 1& 1& -1& -1& 1& -1& 1 \\
\chi_8&1& -1& -1& -1& 1& 1& 1& 1& 1& -1& -1 \\
\chi_9&2& 0& 0& 2& -2& 2& 0& 0& 0& -2& 0 \\
\chi_{10}&2& 0& 0& -2& -2& 2& 0& 0& 0& 2& 0 \\
\chi_{11}&4& 0& 0& 0& 0& -4& 0& 0& 0& 0& 0 \\
\end{array}$$
\end{footnotesize}
\caption{Character table of the group $G$ in Example~\ref{example2}.}
\label{tabla21}
\end{table}

Then there are  $4$ equivalence classes in the equivalence relation respect to $N$, defined on $\irr{G}$, and so also $4$ minimal $G$-invariant characters of $N$, denoted $\widehat{\theta_i}$, $i=1,\dots,4$, with $\Omega=\{\theta_i\mid i=1,\dots,4\}$ a representative system of the orbits in the action by conjugation of $G$ on $\irr{N}$.
From the observation of proportional rows in the character table, when restricting to the described conjugacy classes, we obtain:
$$
\begin{array}{ll}
\irr{G|\theta_1}=\{\chi_1, \chi_2, \chi_5, \chi_6  \}, \quad & \irr{G|\theta_2}=\{\chi_3, \chi_4, \chi_7, \chi_8  \}, \\

\irr{G|\theta_3}=\{\chi_9, \chi_{10} \}, \quad & \irr{G|\theta_4}=\{\chi_{11}  \},
\end{array}$$

In particular, for each $i=1,\dots, 4$, we know the size of $ \irr{G|\theta_i}$. Also, whenever $\chi,\vartheta\in \irr{G|\theta_i}$, it holds that $$\chi_N=\vartheta_N=e_i\widehat{\theta_i}.$$

We may choose
$\{\chi_1, \chi_3, \chi_9,\chi_{11}\}$ a representative
system of the equivalence classes on $\irr{G}$, and construct the corresponding $G$-character table $\textsf{X}$ of $N$, which is the matrix of dimension $4\times 4$ in Table~\ref{tabla22}.

\setlength{\arraycolsep}{1.5pt}
\begin{table}
\begin{footnotesize}
$$\begin{array}{ccccc}
& 1A& 2B& 4A& 2D \\
& 1&4& 2& 1 \\
&&&&\\

\chi_1 & 1 & 1 & 1 & 1 \\

\chi_3 & 1 &-1 & 1 & 1   \\

\chi_9 & 2 &0 & -2 & 2   \\

\chi_{11} & 4 &0 & 0 & -4   \\
\end{array}$$
\end{footnotesize}
\caption{$G$-character table $\textsf{X}$ of $N$ in Example~\ref{example2}.}
\label{tabla22}
\end{table}

 Note also that $\textsf{D}=\text{diag}(1,4,2,1)$ is the associated diagonal matrix, whose diagonal entries are the sizes of the $G$-conjugacy classes that built $N$.

According to Corollary~\ref{relations}, we get in
this case that the numbers appearing in the right side of relations
$(A_i)$ and $(B_i)$, $1\leq i \leq 4$, are both $1,1,2,4$, respectively. From equations $(A_i)$ and $(B_i)$, $1\le i\le 4$, we deduce now:

For each $i=1,2$, $e_i=t_i=\theta_i(1)=1$, and for all $\chi\in \irr{G|\theta_i}$,  $\chi_N=\theta_i=\widehat{\theta_i}$.

For $i=3$, $e_3=\theta_3(1)=1$, $t_3=2$, and for all $\chi\in \irr{G|\theta_i}$,  $\chi_N=\theta_3+{\theta_3}^x=\widehat{\theta_3}$, for some $x\in G$.

However, for $i=4$, two possibilities appear:
either

$e_4=\theta_4(1)=1$, $t_4=4$, and $(\chi_{11})_N  = \theta_4+{\theta_4}^x+{\theta_4}^y+{\theta_4}^z=\widehat{\theta_4}$, for suitable $x,y,z\in G$, or

$e_4=\theta_4(1)=2$, $t_4=1$, and $(\chi_{11})_N  = 2\theta_4=2\widehat{\theta_4}.$

Actually, both cases are possible. The normal subgroup $M$, which
is the intersection of the kernels of $\chi_4$ and $\chi_8$ (and so $M$ is the union of the
conjugacy classes $1A, 8A, 4A$ and $2D$, and its order is $8$), happens to have the
same $G$-character table as $N$, by replacing $\chi_3$ by $\chi_2$, and $2B$ by $8A$, associated to the same diagonal matrix $\textsf{D}$ of $G$-conjugacy class sizes, so that the relations $(A_i)$, $(B_i)$, $1\le i\le 4$, are the same for both subgroups. Indeed, the    information derived for $N$ and $M$ is the same, with corresponding suitable irreducible characters of $G$. Considering, for instance, in addition the orders of elements in $N$ and $M$, one easily deduces que $M$ is abelian but $N$ is not, and so the first possibility for $i=4$ corresponds to $M$, while the second one corresponds to $N$.

\end{example}


\end{document}